\newtheorem{theorem}{Theorem}
\newtheorem{lemma}{Lemma}
\newtheorem{corollary}{Corollary}
\newtheorem*{W}{Theorem W}
\newtheorem*{GG}{Theorem GG (\cite{GG})}
\newtheorem*{GG1}{Corollary GG1 (\cite{GG})}
\newtheorem*{GG2}{Corollary GG2 (\cite{GG})}
\begin{document}
\author{Ushangi Goginava }
\title[Marcinkiewicz-Fej\'er means ]{Pointwise convergence of
Marcinkiewicz-Fejér means of double Vilenkin-Fourier series}
\address{U. Goginava, Department of Mathematics, Faculty of Exact and
Natural Sciences, Ivane Javakhishvili Tbilisi State University, Chavchavadze
str. 1, Tbilisi 0128, Georgia}
\email{zazagoginava@gmail.com}
\maketitle

\begin{abstract}
In this paper we give a characterization of points in which
Marcinkiewicz-Fej\'er means of double Vilenkin-Fourier series converges.
\end{abstract}

\footnotetext{%
2010 Mathematics Subject Classification. 42C10.
\par
Key words and phrases: Vilenkin function, Pointwise summability,
Marcinkiewicz-Fejér means, Lebesgue points.
\par
The research was supported by Shota Rustaveli National Science Foundation
grant no.DI/9/5-100/13 (Function spaces, weighted inequalities for integral
operators and problems of summability of Fourier series)}

\section{Introduction}

Lebesgue's \cite{leb} theorem is well known for trigonometric Fourier
series: the Fej{\'e}r means $\sigma_n f$ of $f$ converge to $f$ almost
everywhere if $f\in L_1([0,2\pi ))$ (see also Zygmund \cite{zy}). The
analogous result for Walsh-Fourier series is due to Fine \cite{Fi}. Later
Schipp \cite{Sc} showed that the maximal operator $\sigma ^{*}$ of the Fejér
means of the one-dimensional Walsh-Fourier series is of weak type (1,1),
from which the a. e. convergence follows by standard arguments.

Marcinkievicz \cite{ma} verified for two-dimensional trigonometric Fourier
series that the Mar\-cinkiewicz-Fej{é}r means 
\begin{equation*}
\sigma _{n}\left( f\right) =\frac{1}{n}\sum\limits_{j=1}^{n}S_{j,j}\left(
f\right)
\end{equation*}%
of a function $f\in L\log L([0,2\pi )\times \lbrack 0,2\pi ))$ converge a.e.
to $f$ as $n\rightarrow \infty $, where $S_{j,j}\left( f\right) $ denotes
the cubic partial sums of the Fourier series of $f$. Later Zhizhiashvili 
\cite{zi2,zi1} extended this result to all $f\in L_{1}([0,2\pi )\times
\lbrack 0,2\pi ))$. The analogous result for two-dimensional Walsh-Fourier
series is due to Weisz \cite{We2}.

In the one-dimensional case the set of convergence is characterized with the
help of Lebesgue points. It is known that a.e. point $x\in [0,2\pi )$ is a
Lebesgue point of $f\in L_{1}\left( [0,2\pi )\right) $ and the Fejér means
of the trigonometric Fourier series of $f\in L_{1}\left( [0,2\pi )\right) $
converge to $f$ at each Lebesgue point (see Butzer and Nessel \cite{BN}).
Weisz \cite{We1} introduced the notion of Walsh-Lebesgue points and proved
the analogous results for Walsh-Fourier series.

For Vilenkin-Fourier series the author and Gogoladze \cite{GG} introduced
the notion of Vilenkin-Lebesgue points and proved that Fej\'er means of the
Vilenkin-Fourier series of $f\in L_{1}\left( G_{m}\right) $ converges to $f$
at each Vilenkin-Lebesgue points. In this paper we will generalize these
results for the Marcinkiewicz-Fejér means of double Vilenkin-Fourier series
and characterize the set of convergence of these means. We introduce the
Marcinkiewicz-Lebesgue points and prove that a.e. point is a
Marcinkiewicz-Lebesgue point of an integrable function $f$ and the
Marcinkiewicz-Fejér means of the double Vilenkin-Fourier series of $f$
converge to $f$ at each Marcinkiewicz-Lebesgue point.

The problems of summability of cubical partial sums of multiple Fourier
series have been investigated in (\cite{gat}-\cite{GW}).

\section{Definitions and Notation}

Let $\mathbb{N}_{+}$ denote the set of positive integers, $\mathbb{N}:=%
\mathbb{N}_{+}\cup \{0\}.$ Let $m:=(m_{0},m_{1},...)$ denote a sequence of
positive integers not less than $2.$ Denote by $Z_{m_{k}}:=\{0,1,...,m_{k}-1%
\}$ the additive group of integers modulo $m_{k}$. Define the group $G_{m}$
as the complete direct product of the groups $Z_{m_{j}},$ with the product
of the discrete topologies of $Z_{m_{j}}$'s. The direct product $\mu $ of
the measures 
\begin{equation*}
\mu _{k}(\{j\}):=\frac{1}{m_{k}}\quad (j\in Z_{m_{k}})
\end{equation*}%
is the Haar measure on $G_{m}$ with $\mu (G_{m})=1.$ If the sequence $m$ is
bounded, then $G_{m}$ is called a bounded Vilenkin group. The elements of $%
G_{m}$ can be represented by sequences $x:=(x_{0},x_{1},...,x_{j},...)$, $%
(x_{j}\in Z_{m_{j}}).$ The group operation $+$ in $G_{m}$ is given by $%
x+y=\left( x_{0}+y_{0}\left( \text{mod}m_{0}\right) ,...,x_{k}+y_{k}\left( 
\text{mod}m_{k}\right) ,...\right) $ , where $x=\left(
x_{0},...,x_{k},...\right) $ and $y=\left( y_{0},...,y_{k},...\right) \in
G_{m}$. The inverse of $+$ will be denoted by $-$. In this paper we will
consider only bounded Vilenkin group.

It is easy to give a base for the neighborhoods of $G_{m}:$ 
\begin{equation*}
I_{0}(x):=G_{m},
\end{equation*}%
\begin{equation*}
I_{n}(x):=\{y\in G_{m}|y_{0}=x_{0},...,y_{n-1}=x_{n-1}\}
\end{equation*}%
for $x\in G_{m},\ n\in {\mathbb{N}}$. Define $I_{n}:=I_{n}(0)$ for $n\in {%
\mathbb{N}}_{+}$. Set $e_{n}:=\left( 0,...,0,1,0,...\right) \in G_{m}$ the $%
n\,$th\thinspace coordinate of which is 1 and the rest are zeros $\left(
n\in \mathbb{N}\right) .$

If we define the so-called generalized number system based on $m$ in the
following way: $M_{0}:=1,M_{k+1}:=m_{k}M_{k}(k\in {\mathbb{N}}),$ then every 
$n\in {\mathbb{N}}$ can be uniquely expressed as $n=\sum\limits_{j=0}^{%
\infty }n_{j}M_{j},$ where $n_{j}\in Z_{m_{j}}\ (j\in {\mathbb{N}}_{+})$ and
only a finite number of $n_{j}$'s differ from zero. We use the following
notation. Let (for $n>0$) $|n|:=\max \{k\in {\mathbb{N}}:n_{k}\neq 0\}$
(that is, $M_{|n|}\leq n<M_{|n|+1}$).

Next, we introduce on $G_{m}$ an orthonormal system which is called the
Vilenkin system. At first define the complex valued functions $%
r_{k}(x):G_{m}\rightarrow {\mathbb{C}}$, the generalized Rademacher
functions in this way 
\begin{equation*}
r_{k}(x):=\exp \left( \frac{2\pi \imath x_{k}}{m_{k}}\right) \ (\imath
^{2}=-1,\ x\in G_{m},\ k\in \mathbb{N}).
\end{equation*}

It is known that 
\begin{equation}
\sum\limits_{i=0}^{m_{n}-1}r_{n}^{i}\left( x\right) =\left\{ 
\begin{array}{l}
0,\text{if }x_{n}\neq 0 \\ 
m_{n},\text{ if }x_{n}=0%
\end{array}%
.\right.  \label{rad}
\end{equation}

\noindent Now define the Vilenkin system $\psi :=(\psi _{n}:n\in {\mathbb{N}}%
)$ on $G_{m}$ as follows: 
\begin{equation*}
\psi _{n}(x):=\prod\limits_{k=0}^{\infty }r_{k}^{n_{k}}(x)\quad (n\in 
\mathbb{N}).
\end{equation*}

\noindent Specifically, we call this system the Walsh-Paley one if $m\equiv
2.$

\noindent The Vilenkin system is orthonormal and complete in $L^{1}(G_{m})$ 
\cite{AVDR}.

We consider the double system $\left\{ \psi _{n}(x)\times \psi
_{m}(y):\,n,m\in \mathbb{N}\right\} $ on $G_{m}\times G_{m}$. The notation $%
a\lesssim b$ in the whole paper stands for $a\leq cb$, where $c$ is an
absolute constant.

The rectangular partial sums of the double Vilenkin-Fourier series are
defined as follows:

\begin{equation*}
S_{M,N}\left( x,y;f\right) :=\sum\limits_{i=0}^{M-1}\sum\limits_{j=0}^{N-1}%
\widehat{f}\left( i,j\right) \psi _{i}\left( x\right) \psi _{j}\left(
y\right) ,
\end{equation*}%
where the number 
\begin{equation*}
\widehat{f}\left( i,j\right) =\int\limits_{G_{m}\times G_{m}}f\left(
x,y\right) \psi _{i}\left( x\right) \psi _{j}\left( y\right) d\mu \left(
x,y\right)
\end{equation*}%
is said to be the $\left( i,j\right) $th Vilenkin-Fourier coefficient of the
function\thinspace $f.$

The norm (or quasinorm) of the space $L_{p}\left( G_{m}\times G_{m}\right) $
is defined by 
\begin{equation*}
\left\Vert f\right\Vert _{p}:=\left( \int\limits_{G_{m}\times
G_{m}}\left\vert f\left( x,y\right) \right\vert ^{p}d\mu \left( x,y\right)
\right) ^{1/p}\,\,\,\,\left( 0<p<+\infty \right) .
\end{equation*}%
The space weak-$L_{p}\left( G_{m}\times G_{m}\right) $ consists of all
measurable functions $f$ for which 
\begin{equation*}
\left\Vert f\right\Vert _{\text{weak}-L_{p}\left( G_{m}\times G_{m}\right)
}:=\sup\limits_{\lambda >0}\lambda \mu \left( \left\vert f\right\vert
>\lambda \right) ^{1/p}<+\infty .
\end{equation*}

The $\sigma $-algebra generated by the dyadic 2-dimensional $I_{k}\times
I_{k}$ cube of measure $M_{k}^{-1}\times M_{k}^{-1}$ will be denoted by $%
F_{k}\left( k\in \mathbb{N}\right) .$ Denote by $f=\left( f^{\left( n\right)
},n\in \mathbb{N}\right) $ one-parameter martingales with respect to $\left(
F_{n},n\in \mathbb{N}\right) $ (for details see, e. g. \cite{WeBook}). The
maximal function of a martingale $f$ is defined by 
\begin{equation*}
f^{\ast }=\sup\limits_{n\in \mathbb{N}}\left\vert f^{\left( n\right)
}\right\vert .
\end{equation*}%
In case $f\in L_{1}\left( G_{m}\times G_{m}\right) $, the maximal function
can also be given by 
\begin{equation*}
f^{\ast }\left( x,y\right) =\sup\limits_{n\in \mathbb{N}}\frac{1}{\mu \left(
I_{n}(x)\times I_{n}(y)\right) }\left\vert \int\limits_{I_{n}(x)\times
I_{n}(y)}f\left( t,u\right) d\mu \left( t,u\right) \right\vert ,\,\,
\end{equation*}%
\begin{equation*}
\left( x,y\right) \in G_{m}\times G_{m}.
\end{equation*}

For $0<p<\infty $ the dyadic martingale Hardy space $H_{p}(G\times G)$
consists of all martingales for which

\begin{equation*}
\left\| f\right\| _{H_{p}}:=\left\| f^{*}\right\| _{p}<\infty .
\end{equation*}

If $f\in L_{1}\left( G_{m}\times G_{m}\right) $ then it is easy to show that
the sequence $\left( S_{M_{n},M_{n}}\left( f\right) :n\in \mathbb{N}\right) $
is a martingale. If $f$ is a martingale, that is $f=(f^{\left( 0\right)
},f^{\left( 1\right) },...)$ then the Vilenkin-Fourier coefficients must be
defined in a little bit different way: 
\begin{equation*}
\widehat{f}\left( i,j\right) =\lim\limits_{k\rightarrow \infty
}\int\limits_{G_{m}\times G_{m}}f^{\left( k\right) }\left( x,y\right) \psi
_{i}\left( x\right) \psi _{j}\left( y\right) d\mu \left( x,y\right) .
\end{equation*}%
The Vilenkin-Fourier coefficients of $f\in L_{1}\left( G_{m}\times
G_{m}\right) $ are the same as the ones of the martingale $\left(
S_{M_{n},M_{n}}\left( f\right) :n\in \mathbb{N}\right) $ obtained from $f$.

For $n=1,2,...$ and a martingale $f$ the Marcinkiewicz-Fejér means of order $%
n$ of the 2-dimensional Vilenkin-Fourier series of the function $f$ is given
by 
\begin{equation*}
\sigma _{n}\left( x,y;f\right) =\frac{1}{n}\sum\limits_{j=0}^{n-1}S_{j,j}%
\left( x,y;f\right) .
\end{equation*}%
If 
\begin{equation*}
K_{n}\left( x,y\right) :=\frac{1}{n}\sum\limits_{k=0}^{n-1}D_{k}\left(
x\right) D_{k}\left( y\right)
\end{equation*}%
denotes the 2-dimensional Marcinkiewicz-Fejér kernel of order $n$ then 
\begin{equation}
\sigma _{n}\left( x,y;f\right) =\int\limits_{G_{m}\times G_{m}}f\left(
t,u\right) K_{n}\left( x-t,y-u\right) d\mu \left( t,u\right) .
\label{integral}
\end{equation}

A bounded measurable function $a$ is a p-atom, if there exists a generalized
square $I\times J\in F_{n}\mathbf{,}$ such that

a) $\int\limits_{I\times J}ad\mu =0$;

b) $\left\| a\right\| _{\infty }\leq \mu (I\times J)^{-1/p}$;

c) supp $a\subset I\times J$.

An operator $T$ which maps the set of martingales into the collection of
measurable functions will be called p-quasi-local if there exist a constant $%
C_{p}>0$ such that for every p-atom $a$ 
\begin{equation*}
\int\limits_{G_{m}\times G_{m}\backslash \left( I\times J\right)
}|Ta|^{p}\leq C_{p}<\infty ,
\end{equation*}%
where $I\times J$ is the support of the atom.

\section{Marcinkiewicz-Lebesgue points}

In the one-dimensional case a point $x\in \left( -\infty ,\infty \right) $
is called a Lebesgue point of a function $f$ if 
\begin{equation*}
\lim\limits_{h\rightarrow 0}\frac{1}{h}\int\limits_{0}^{h}|f\left(x+
t\right) -f\left( x\right) |dt=0.
\end{equation*}
It is known that a.e. point $x\in [0,2\pi )$ is a Lebesgue point of $f\in
L_{1}\left( [0,2\pi )\right) $ and that the Fejér means of the trigonometric
Fourier series of $f\in L_{1}\left( [0,2\pi )\right) $ converge to $f$ at
each Lebesgue point (see Butzer and Nessel \cite{BN}). Feichtinger and Weisz 
\cite{FW} extended these results to two-dimensional trigonometric Fourier
series, to arbitrary summability methods and to all $f\in L\left( \log
L\right) ^{+}\left( [0,2\pi )^{2}\right) $.

Weisz introduced the one-dimensional Walsh-Lebesgue point in \cite{We1}: $%
x\in G_{2}$ is a Walsh-Lebesgue point of $f\in L_{1}\left( G_{2}\right) ,$
if 
\begin{equation*}
\lim\limits_{n\rightarrow \infty
}\sum\limits_{k=0}^{n}2^{k}\int\limits_{I_{n}\left( e_{k}\right) }|f\left(
x+t\right) -f\left( x\right) |dt=0.
\end{equation*}%
He proved that a.e. point $x\in G_{2}$ is a Walsh-Lebesgue point of an
integrable function $f$. Moreover, the Fejér means of the Walsh-Fourier
series of $f\in L_{1}\left( G_{2}\right) $ converge to $f$ at each
Walsh-Lebesgue point. The higher dimensional extension of this result can be
found in \cite{wwalsh-lebesgue,GG}.

In \cite{GG} it is characterized the set of convergence of Vilenkin-Fejér
means. We introduced the operator 
\begin{equation*}
W_{A}f\left( x\right)
:=\sum\limits_{s=0}^{A-1}M_{s}\sum\limits_{r_{s}=1}^{m_{s}-1}\int%
\limits_{I_{A}\left( x-r_{s}e_{s}\right) }|f\left( t\right) -f\left(
x\right) |d\mu \left( t\right) .
\end{equation*}

A point $x\in G_{m}$ is a Vilenkin-Lebesgue point of $f\in L_{1}\left(
G_{m}\right) ,$ if 
\begin{equation*}
\lim\limits_{A\rightarrow \infty }W_{A}f\left( x\right) =0.
\end{equation*}

The following are proved in \cite{GG}.

\begin{GG}
Let $f\in L_{1}\left( G_{m}\right) $, where $G_{m}$ is a bounded Vilenkin
group. Then 
\begin{equation*}
\lim\limits_{n\rightarrow \infty }\sigma _{n}f\left( x\right) =f\left(
x\right)
\end{equation*}%
for all Vilenkin-Lebesgue points of $f$.
\end{GG}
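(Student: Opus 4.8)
The plan is to start from the one-dimensional integral representation of the Fej\'er means: since $\int_{G_m}K_n\,d\mu=1$,
\[
\sigma_n f(x)-f(x)=\int_{G_m}\bigl(f(x-t)-f(x)\bigr)K_n(t)\,d\mu(t),\qquad K_n=\frac1n\sum_{k=0}^{n-1}D_k,
\]
and to reduce everything to a pointwise estimate for the Vilenkin--Fej\'er kernel $K_n$ on the pieces of $G_m$ that occur in the definition of $W_A$. Put $N:=|n|$, so $M_N\le n<M_{N+1}$. I would decompose $G_m$ as the ``central'' set $I_N$ together with, for $0\le s<N$ and $1\le r\le m_s-1$, the cosets $I_{s+1}(re_s)$ (those $t$ whose first nonzero digit is $r$, in position $s$), and I would split each such coset as
\[
I_{s+1}(re_s)=\Bigl(\bigsqcup_{A=s+1}^{N-1}\bigl(I_A(re_s)\setminus I_{A+1}(re_s)\bigr)\Bigr)\sqcup I_N(re_s),
\]
so that on the annulus $I_A(re_s)\setminus I_{A+1}(re_s)$ the point $t$ has its second nonzero digit in position $A$.

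The technical heart is the kernel estimate. On $I_N$ one has $|K_n(t)|\lesssim M_N$: for $t\in I_N$ each $\psi_j(t)$, $j<n$, collapses to $r_N(t)^{j_N}$, hence each $D_k(t)$ is a geometric sum in $r_N(t)$ and $|D_k(t)|\lesssim M_N$, using that $|r_N(t)-1|$ is bounded away from $0$ when $t_N\ne 0$ (here boundedness of $m$ is essential) and $D_k(t)=k\lesssim M_N$ when $t_N=0$. On $I_A(re_s)$ with $0\le s<A\le N$ the key estimate is $|K_n(t)|\lesssim M_sM_A/n$. I would obtain it from $nK_n(t)=\sum_{j=0}^{n-2}(n-1-j)\psi_j(t)$: for such $t$ the character $\psi_j(t)$ depends on $j$ only through the digit $j_s$ and the digits $j_i$ with $i\ge A$; summing over blocks of $j$ on which $j_s$ runs through a full period annihilates the sum by the Rademacher identity \eqref{rad} (as $t_s\ne 0$), and summing the remaining ``carries'' over blocks on which $j_A$ runs through a full period annihilates them again (as $t_A\ne 0$), leaving only a bounded number of incomplete blocks, each contributing $\lesssim M_sM_A$ once the denominators $r_s(t)-1$ and $r_A(t)-1$ are controlled --- again via boundedness of $m$. (For a general $n$ one may first split $\{0,\dots,n-1\}$ into its digit blocks and reduce to the scales $M_0,\dots,M_N$; the direct computation also works. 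Estimates of this type are the ones used in \cite{GG}.)

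Plugging these into the integral and changing variable $t\mapsto x-t$ (which maps $I_A(re_s)$ measure-preservingly onto $I_A(x-re_s)$), the central part contributes $\lesssim M_N\int_{I_N(x)}|f(t)-f(x)|\,d\mu(t)=:a_N$, and the cosets $I_{s+1}(re_s)$ contribute
\[
\lesssim\sum_{s=0}^{N-1}\sum_{r=1}^{m_s-1}\sum_{A=s+1}^{N}\frac{M_sM_A}{n}\int_{I_A(re_s)}|f(x-t)-f(x)|\,d\mu(t)=\sum_{A=1}^{N}\frac{M_A}{n}\,W_A f(x)\le\sum_{A=1}^{N}\frac{M_A}{M_N}\,W_A f(x),
\]
so altogether $|\sigma_n f(x)-f(x)|\lesssim a_N+\sum_{A=1}^{N}(M_A/M_N)\,W_A f(x)$.

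Finally, let $x$ be a Vilenkin--Lebesgue point, so that $W_A f(x)\to 0$ and each $W_A f(x)<\infty$. Since $M_A/M_N\le 2^{-(N-A)}$ and $\sum_{A\le N}M_A/M_N\le 2$, the weighted mean $\sum_{A\le N}(M_A/M_N)W_A f(x)$ of the null sequence $\bigl(W_A f(x)\bigr)$ tends to $0$. As for $a_N$, telescope $I_N(x)=\bigsqcup_{j\ge N}\bigl(I_j(x)\setminus I_{j+1}(x)\bigr)$ and observe that $M_j\int_{I_j(x)\setminus I_{j+1}(x)}|f-f(x)|\,d\mu$ is precisely the $s=j$ term of $W_{j+1}f(x)$; hence $a_N=\sum_{j\ge N}(M_N/M_j)\bigl[\,s=j\text{ term of }W_{j+1}f(x)\bigr]\le 2\sup_{j\ge N}W_{j+1}f(x)\to 0$. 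Therefore $\sigma_n f(x)\to f(x)$. I expect the main obstacle to be the kernel estimate $|K_n(t)|\lesssim M_sM_A/n$ and, hand in hand with it, the bookkeeping that matches the pieces of $K_n$ to the summands of $W_A$ with the correct weights $M_s$; once that is done the rest reduces to the two elementary limit arguments just given.
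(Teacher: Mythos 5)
Your argument is correct, and it is essentially the approach the paper takes: the statement you are proving is quoted from \cite{GG} without proof, but the paper's proof of the two‑dimensional Theorem \ref{th1} follows exactly your scheme --- the convolution representation, the P\'al--Simon type kernel estimate (\ref{fejer}) expressing $n|K_n|$ through the translated Dirichlet kernels $D_{M_j}(\cdot-x_se_s)$, the re-summation turning the resulting bound into $\frac{c}{n}\sum_{A\le N}M_AW_Af(x)$, and the weighted-average limit argument. Two remarks. First, your kernel bound $|K_n(t)|\lesssim M_sM_A/n$ should be asserted only on the annulus $I_A(re_s)\setminus I_{A+1}(re_s)$ (i.e.\ for $t$ whose second nonzero digit sits exactly at position $A$), not on all of $I_A(re_s)$, where $|K_n(t)|$ can be as large as $M_sM_{A'}/n$ with $A'>A$; your displayed inequality is nevertheless correct because you apply the bound on the annuli and then enlarge the domain of integration, the integrand being nonnegative. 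Second, your explicit treatment of the central term $a_N=M_N\int_{I_N(x)}|f-f(x)|\,d\mu$ --- telescoping $I_N(x)$ into the shells $I_j(x)\setminus I_{j+1}(x)$ and identifying each shell's contribution with the $s=j$ term of $W_{j+1}f(x)$ --- is a point the source glosses over: as printed, (\ref{est1}) omits the contribution of the central cube (its right-hand side vanishes on $I_{A+1}$ while $K_{M_A}=(M_A-1)/2$ there), so the central region must be handled separately exactly as you do. In that respect your write-up is slightly more complete than the paper's.
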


\begin{GG1}
Let $f\in L_{1}\left( G_{m}\right) $, where $G_{m}$ is a bounded Vilenkin
group. Then 
\begin{equation*}
\lim\limits_{A\rightarrow \infty }W_{A}f\left( x\right) =0\,\,\,\,\text{%
\thinspace for a. e. }x\in G_{m},
\end{equation*}%
thus a. e. point is a Vilenkin-Lebesgue point of $f$.
\end{GG1}

\begin{GG2}
Let $f\in L_{1}\left( G_{m}\right) ,$where $G_{m}$ is a bounded Vilenkin
group. Then%
\begin{equation*}
\sigma _{n}\left( x;f\right) \rightarrow f\left( x\right) \text{ for a.e. }%
x\in G_{m}.
\end{equation*}
\end{GG2}

For two-dimensional Walsh-Fourier series Weisz \cite{We2} has proved that
for all $f\in L_{1}\left( G\times G\right) $ the Marcinkiewicz-Fej{é}r means 
$\sigma _{n}f$ converge a.e. to $f$ as $n\rightarrow \infty $. 

In \cite{GW} it is characterized the set of convergence of Marcinkiewicz-Fejé%
r means of two-dimensional Walsh-Fourier series.

For two-dimensional Vilenkin-Fourier series Gat \cite{gat} has proved that
for all $f\in L_{1}\left( G_{m}\times G_{m}\right) $ the Marcinkiewicz-Fej{é}%
r means $\sigma _{n}f$ converge a.e. to $f$ as $n\rightarrow \infty $.

In this paper we will characterize the set of convergence of
Marcinkiewicz-Fejér means with respect to bounded Vilenkin system. We
introduce the Marcinkiewicz-Lebesgue points and prove that a.e. point is a
Marcinkiewicz-Lebesgue point of an integrable function $f$ and the
Marcinkiewicz-Fejér means of the two-dimensional Vilenkin-Fourier series of $%
f$ converge to $f$ at each Marcinkiewicz-Lebesgue point.

Set%
\begin{equation*}
W_{j}\left( x,y;f\right)
:=M_{j}^{-1}\sum\limits_{q=0}^{j-1}\sum\limits_{k=q}^{j-1}\sum%
\limits_{u_{q}=1}^{m_{q}-1}M_{q}M_{k}^{2}
\end{equation*}%
\begin{equation*}
\times \int\limits_{I_{k}\times I_{k}\left( u_{q}e_{q}\right) }\left\vert
f\left( x-t,y-u\right) -f\left( x,y\right) \right\vert r_{k+1,j-1}\left(
t,u\right) d\mu \left( t,u\right) 
\end{equation*}%
\begin{equation*}
+M_{j}^{-1}\sum\limits_{q=0}^{j-1}\sum\limits_{k=q}^{j-1}\sum%
\limits_{t_{q}=1}^{m_{q}-1}M_{q}M_{k}^{2}
\end{equation*}%
\begin{equation*}
\times \int\limits_{I_{k}\left( t_{q}e_{q}\right) \times I_{k}}\left\vert
f\left( x-t,y-u\right) -f\left( x,y\right) \right\vert r_{k+1,j-1}\left(
t,u\right) d\mu \left( t,u\right) 
\end{equation*}%
\begin{equation*}
+\sum\limits_{s=0}^{j}\sum\limits_{i=s}^{j}M_{s}M_{i}\sum%
\limits_{u_{s}=1}^{m_{s}-1}
\end{equation*}%
\begin{equation*}
\times \int\limits_{I_{j}\times I_{i}\left( u_{s}e_{s}\right) }\left\vert
f\left( x-t,y-u\right) -f\left( x,y\right) \right\vert d\mu \left(
t,u\right) 
\end{equation*}%
\begin{equation*}
+\sum\limits_{s=0}^{j}\sum\limits_{i=s}^{j}M_{s}M_{i}\sum%
\limits_{t_{s}=1}^{m_{s}-1}
\end{equation*}%
\begin{equation*}
\times \int\limits_{I_{i}\left( t_{s}e_{s}\right) \times I_{j}}\left\vert
f\left( x-t,y-u\right) -f\left( x,y\right) \right\vert d\mu \left(
t,u\right) ,
\end{equation*}%
where%
\begin{equation*}
r_{i,n}\left( x,y\right) :=\prod\limits_{l=i}^{n}\left(
\sum\limits_{s=0}^{m_{l}-1}\psi _{M_{l}}^{s}\left( x+y\right) \right) .
\end{equation*}%
By (\ref{rad}) it is easy to show that%
\begin{equation*}
r_{i,n}\left( x,y\right) =\left\{ 
\begin{array}{l}
m_{i}m_{i+1}\cdots m_{n},x_{j}+y_{j}\left( \text{mod}m_{j}\right)
=0,j=i,i+1,...,n \\ 
0,\text{otherwise}%
\end{array}%
.\right. 
\end{equation*}

A point $\left( x,y\right) \in G_{m}\times G_{m}$ is a
Marcinkiewicz-Lebesgue point (for bounded Vilenkin group) of $f\in
L_{1}\left( G_{m}\times G_{m}\right) ,$ if 
\begin{equation*}
\lim\limits_{n\rightarrow \infty }W_{n}\left( x,y;f\right) =0.
\end{equation*}

Set%
\begin{equation}
V_{n}\left( x,y;f\right)
:=\sum\limits_{q=0}^{n-1}\sum\limits_{k=q}^{n-1}\sum%
\limits_{t_{q}=1}^{m_{q}-1}\frac{M_{q}M_{k}}{m_{k}}  \label{bound}
\end{equation}%
\begin{equation*}
\times \int\limits_{I_{k}\left( t_{q}e_{q}\right) \times I_{k}}f\left(
x-t,y-u\right) \mathbb{I}_{\left\{ t_{r}+u_{r}\left( \text{mod} m_{r}\right)
=0,r=k+1,...,n-1\right\} }\left( t,u\right) d\mu \left( t,u\right) 
\end{equation*}%
\begin{equation*}
+\sum\limits_{q=0}^{n-1}\sum\limits_{k=q}^{n-1}\sum%
\limits_{u_{q}=1}^{m_{q}-1}\frac{M_{q}M_{k}}{m_{k}}
\end{equation*}%
\begin{equation*}
\times \int\limits_{I_{k}\times I_{k}\left( u_{q}e_{q}\right) }f\left(
x-t,y-u\right) \mathbb{I}_{\left\{ t_{r}+u_{r}\left( \text{mod} m_{r}\right)=0,r=k+1,...,n-1\right\}
}\left( t,u\right) d\mu \left( t,u\right) d\mu \left( t,u\right) 
\end{equation*}%
\begin{equation*}
+\sum\limits_{s=0}^{n}\sum\limits_{i=s}^{n}M_{s}M_{i}\sum%
\limits_{u_{s}=1}^{m_{s}-1}
\end{equation*}%
\begin{equation*}
\times \int\limits_{I_{n}\times I_{i}\left( u_{s}e_{s}\right) }f\left(
x-t,y-u\right) d\mu \left( t,u\right) 
\end{equation*}%
\begin{equation*}
+\sum\limits_{s=0}^{n}\sum\limits_{i=s}^{n}M_{s}M_{i}\sum%
\limits_{t_{s}=1}^{m_{s}-1}
\end{equation*}%
\begin{equation*}
\times \int\limits_{I_{i}\left( t_{s}e_{s}\right) \times I_{n}}f\left(
x-t,y-u\right) d\mu \left( t,u\right) 
\end{equation*}%
\begin{equation*}
:=\sum\limits_{j=1}^{4}V_{n}^{\left( j\right) }\left( x,y;f\right) ,
\end{equation*}%
where $\mathbb{I}_{E}$ is characteristic function of the set $E$.

It is easy to see that $W_{n}f\left( x,y\right) \rightarrow 0$ as $%
n\rightarrow \infty $ if and only if 
\begin{equation*}
\lim\limits_{n\rightarrow \infty }V_{n}\left( |f-f(x,y)|\right) \left(
x,y\right) =0.
\end{equation*}

Let 
\begin{equation*}
Vf:=\sup\limits_{n}|V_{n}f|,\qquad V^{\left( i\right)
}f:=\sup\limits_{n}|V_{n}^{\left( i\right) }f|,\,\,i=1,2,3.4.
\end{equation*}

\section{Main Results}

In this paper we prove that the following are true

\begin{theorem}
\label{th1}Let $f\in L_{1}\left( G_{m}\times G_{m}\right) $. Then 
\begin{equation*}
\lim\limits_{n\rightarrow \infty }\sigma _{n}\left( x,y;f\right) =f\left(
x,y\right)
\end{equation*}%
for all Marcinkiewicz-Lebesgue points of $f$.
\end{theorem}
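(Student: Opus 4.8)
\medskip

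The plan is to analyze the kernel $K_n(x-t,y-u)$ directly and reduce the convergence statement to the vanishing of the quantities $V_n^{(i)}$ at Marcinkiewicz--Lebesgue points. First I would recall that, by \eqref{integral}, the difference $\sigma_n(x,y;f)-f(x,y)$ equals $\int_{G_m\times G_m}\bigl(f(x-t,y-u)-f(x,y)\bigr)K_n(t,u)\,d\mu(t,u)$, since $\int K_n=1$. So everything hinges on a pointwise/summatory estimate for the Marcinkiewicz--Fej\'er kernel $K_n(t,u)=\frac1n\sum_{k=0}^{n-1}D_k(t)D_k(u)$. The first genuine step is a decomposition of $K_n$ adapted to the generalized number system: write $n=\sum_j n_jM_j$, split the sum $\sum_{k<n}$ according to the ``block'' structure of $k$ relative to the digits of $n$, and use the standard identities for the Vilenkin Dirichlet kernels $D_{M_k}=M_k\mathbb{I}_{I_k}$ together with $D_{k}=D_{M_{|k|}}+\psi_{M_{|k|}}(\cdot)^{?}\cdots$ (the Paley-type expansion). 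This produces a finite sum of terms, each of which is (up to the normalization $1/n\asymp 1/M_{|n|}$) a product of indicator-type factors on sets of the form $I_k(t_q e_q)\times I_k$, $I_k\times I_k(u_q e_q)$, $I_n\times I_i(u_s e_s)$, $I_i(t_s e_s)\times I_n$, weighted by $M_qM_k$, $M_sM_i$, etc.\ — precisely the geometric data appearing in the definition of $W_n$ and $V_n$. The factors $r_{k+1,j-1}(t,u)$ in the definition of $W_n$ are exactly the ``diagonal'' constraints $t_r+u_r\equiv 0$ coming from the product $D_k(t)D_k(u)$ when one expands the Rademacher factors and uses \eqref{rad}.

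\medskip

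Granting such a kernel decomposition, the second step is the pointwise bound
\begin{equation*}
\bigl|\sigma_n(x,y;f)-f(x,y)\bigr|\lesssim W_n\bigl(x,y;f\bigr)+ \text{(error terms)},
\end{equation*}
obtained by substituting the kernel decomposition into the integral, taking absolute values, and recognizing each resulting integral as one of the summands of $W_n$ (equivalently $V_n(|f-f(x,y)|)(x,y)$, by the stated equivalence $W_nf\to 0\iff V_n(|f-f(x,y)|)(x,y)\to 0$). The ``error terms'' are the contributions where the kernel behaves like a genuine Fej\'er kernel in a single variable on a full dyadic interval $I_n$ or $I_k$; these are controlled by the one-dimensional Vilenkin--Lebesgue machinery of Theorem~GG applied in each variable, or simply absorbed because on $I_n\times I_n$ the kernel is $O(M_n^2)\cdot\frac1n=O(M_n)$ and $\int_{I_n\times I_n}|f(x-t,y-u)-f(x,y)|\,d\mu\to 0$ at a Marcinkiewicz--Lebesgue point (which in particular forces the one-dimensional Lebesgue behavior). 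Once the inequality is established, the theorem is immediate: if $(x,y)$ is a Marcinkiewicz--Lebesgue point then $W_n(x,y;f)\to 0$ by definition, hence $\sigma_n(x,y;f)\to f(x,y)$.

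\medskip

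The main obstacle will be the kernel decomposition itself — bookkeeping the expansion of $\frac1n\sum_{k<n}D_k(t)D_k(u)$ into the four families of terms with the correct weights $M_qM_k/m_k$, $M_sM_i$, and with the diagonal indicators $\mathbb{I}_{\{t_r+u_r\equiv0\}}$ emerging cleanly from \eqref{rad}. This is where the two-dimensional ``Marcinkiewicz'' (diagonal $S_{j,j}$) structure interacts nontrivially with the non-dyadic base $m$: unlike the Walsh case, each Rademacher block contributes $m_l$ terms, and one must track both the ``carry-free'' digit conditions on $n$ and the cross-variable cancellation. I expect to need a separate lemma isolating the estimate $\frac1n\bigl|\sum_{k=0}^{n-1}D_k(t)D_k(u)\bigr|\lesssim \sum (\text{the }W_n\text{-type weights})\cdot(\text{indicators})$ pointwise in $(t,u)$, after which the rest is a routine triangle-inequality/Fubini argument. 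A secondary technical point is justifying the manipulations for a general martingale $f\in L_1$: one works with $S_{M_N,M_N}(f)$, proves the estimate there, and passes to the limit, using that Vilenkin--Fourier coefficients of $f$ and of its martingale coincide.
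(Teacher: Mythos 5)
Your plan coincides with the paper's proof: the paper establishes precisely the kernel estimate you anticipate (Lemma \ref{lm1} and Lemma \ref{lm2}, built from the one-dimensional bounds (\ref{est1})--(\ref{est2}) and the decomposition of $n|K_n(x,y)|$ from \cite{GoSMH}), substitutes it into (\ref{integral}) after the triangle inequality, and identifies each resulting integral with a summand of $W_j$ --- there are no leftover ``error terms'' to treat separately. The one refinement to note is that the resulting bound is the Ces\`aro-type average $\frac{c}{n}\sum_{j=0}^{A}M_{j}W_{j}\left( x,y;f\right)$ rather than $W_{n}$ itself, which still tends to zero at a Marcinkiewicz--Lebesgue point because $\sum_{j=0}^{A}M_{j}\lesssim M_{A}\leq n$ and $W_{j}\left( x,y;f\right) \rightarrow 0$.
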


\begin{theorem}
\label{th2}Let $p>1/2$. Then 
\begin{equation*}
\left\Vert Vf\right\Vert _{p}\leq c_{p}\left\Vert f\right\Vert
_{p}\,\,\,\,\,\left( f\in H_{p}\left( G_{m}\times G_{m}\right) \right)
\end{equation*}%
and 
\begin{equation*}
\sup\limits_{\lambda }\lambda \mu \left\{ Vf>\lambda \right\} \leq
c\left\Vert f\right\Vert _{1}.
\end{equation*}
\end{theorem}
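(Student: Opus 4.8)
The plan is to prove that each of the four maximal operators $V^{(i)}$, $i=1,2,3,4$, is $p$-quasi-local for every $p>1/2$ and is of weak type $(1,1)$; the bounds for $Vf$ then follow since $Vf\le\sum_{i=1}^4 V^{(i)}f$. For the weak $(1,1)$ estimate I would first check that each $V^{(i)}$ is bounded on $L_\infty$ (indeed, the $L_\infty$ bound reduces, after estimating $|f(x-t,y-u)|\le\|f\|_\infty$, to summing the coefficients $\tfrac{M_qM_k}{m_k}\mu(I_k(t_qe_q)\times I_k)$ over the indicated ranges with the indicator present, and checking this sum is $O(1)$ uniformly in $n$; for $V^{(3)},V^{(4)}$ one uses that the innermost integral is over a set of measure $\le M_n^{-1}M_i^{-1}$). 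Then, by the standard decomposition-into-atoms argument together with the Calderón--Zygmund type splitting (write $f=f_0+\sum_k f_k$ where $f_0\in L_\infty$ and each $f_k$ is a constant multiple of a $1$-atom), weak $(1,1)$ for $V^{(i)}$ follows from the $L_\infty$-boundedness plus the $1$-quasi-locality; this is the scheme used in \cite{We2,GG,GW} and I would follow it.

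For the $H_p\to L_p$ boundedness with $p>1/2$, the decisive point is the quasi-local estimate: for every $p$-atom $a$ supported on a square $I\times J\in F_N$ (so $I=I_N(s)$, $J=J_N(r)$, $\mu(I\times J)=M_N^{-2}$), one must show
\begin{equation*}
\int\limits_{G_m\times G_m\setminus(I\times J)}\bigl|V^{(i)}a\bigr|^p\,d\mu\le C_p<\infty .
\end{equation*}
By translation invariance I may assume $I\times J$ is centered at the origin, i.e. $I=J=I_N$. Here the key analytic input is the explicit kernel structure already recorded in the excerpt: the functions $r_{i,n}(t,u)$ vanish unless $t_j+u_j\equiv0\ (\mathrm{mod}\ m_j)$ for $j=i,\dots,n$, so the regions of integration in $V_n^{(1)},V_n^{(2)}$ are thin "diagonal" sets, and in $V_n^{(3)},V_n^{(4)}$ the integration is over small rectangles $I_n\times I_i(u_se_s)$. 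For $(x,y)\notin I_N\times I_N$ and using $\int a=0$, a Vilenkin-cube computation shows that $V_n^{(i)}a(x,y)$ is controlled by $\|a\|_\infty$ times a geometric quantity depending on the indices $|x|,|y|$ of $x,y$ (i.e.\ the largest $\ell$ with $x_\ell\ne0$); and then one integrates $|V^{(i)}a|^p$ over the complement, organising the complement of $I_N\times I_N$ into the dyadic annuli determined by $(|x|,|y|)$, each of measure comparable to $M_{|x|}^{-1}M_{|y|}^{-1}$ times $M_N^{-?}$-factors. The resulting sum is a double geometric series in $M_{|x|},M_{|y|}$ which converges precisely because $p>1/2$: the borderline exponent $1/2$ appears because the Marcinkiewicz kernel $K_n$ is a product $D_k(x)D_k(y)$ "thinned along the diagonal", so the effective dimension is $1$ rather than $2$, and the gain per scale is $M^{-1/p}\cdot M$, summable iff $1/p<2$.

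The main obstacle, as usual in this circle of results, is the careful bookkeeping in the quasi-locality estimate: one has to handle separately the cases where the index $|x|$ (resp.\ $|y|$) is $\le N$, is between $N$ and the running parameter, or exceeds it, because the atom's support condition $\mathrm{supp}\,a\subset I_N\times I_N$ interacts differently with the diagonal condition $t_r+u_r\equiv0$ in $V_n^{(1)},V_n^{(2)}$ than with the plain rectangles in $V_n^{(3)},V_n^{(4)}$. In particular one must verify that the contributions from indices $k<N$ (where the atom's cancellation is available) and from $k\ge N$ (where one uses instead the smallness of the integration domain together with $\|a\|_\infty\le M_N^{2/p}$) both feed into the same convergent geometric series. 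Once the quasi-locality constant $C_p$ is pinned down uniformly over atoms, the passage from atoms to general $f\in H_p$ is the routine atomic-decomposition argument (using that $V^{(i)}$ is sublinear and, for $p\le1$, that $H_p$ has an atomic decomposition with $\|f\|_{H_p}^p\sim\inf\sum|\lambda_k|^p$), and the weak-$(1,1)$ bound is obtained along the same lines with $p=1$, $L_\infty$-boundedness replacing the $L_p$-bound on the "good" part. I expect the four pieces $V^{(1)},\dots,V^{(4)}$ to be treated by essentially the same template, with $V^{(1)},V^{(2)}$ (the diagonal-kernel terms) being the technically heaviest.
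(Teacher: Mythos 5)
Your proposal follows essentially the same route as the paper: reduce the $H_p$ and weak-$(1,1)$ bounds to $L_\infty$-boundedness plus $p$-quasi-locality ($1/2<p\le 1$) of each $V^{(i)}$ via Weisz's interpolation theorem (Theorem W), and verify quasi-locality for an atom supported on $I_N\times I_N$ by splitting the complement of the support, using the diagonal constraint $t_r+u_r\equiv 0\ (\mathrm{mod}\ m_r)$ to localize where $V_n^{(i)}a\neq 0$, bounding $|a|$ by $M_N^{2/p}$, and summing the resulting geometric series, which converges exactly for $p>1/2$. This matches the paper's proof (Steps 1--3 over $\overline{I}_N\times\overline{I}_N$, $\overline{I}_N\times I_N$, $I_N\times\overline{I}_N$, with the further split of $V_n^{(1)}$ at $k=N$), the only remaining work being the explicit bookkeeping you already identify as the main task.
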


It is easy to show that $\lim\limits_{n\rightarrow \infty }W_{n}\left(
x,y;f\right) =0$ for every Vilenkin polynomials and $\left( x,y\right) \in
G_{m}\times G_{m}$. Since the Vilenkin polynomials are dense in $%
L_{1}(G_{m}\times G_{m}),$ Theorem 2 and the usual density argument (see
Marcinkiewicz and Zygmund \cite{MZ}) imply

\begin{corollary}
Let $f\in L_{1}\left( G_{m}\times G_{m}\right) $. Then 
\begin{equation*}
\lim\limits_{n\rightarrow \infty }W_{n}\left( x,y;f\right) =0\,\,\,\,\,\text{%
a. e. }\left( x,y\right) \in G_{m}\times G_{m},
\end{equation*}%
thus a. e. points is a Marcinkiewicz-Lebesgue point of $f$.
\end{corollary}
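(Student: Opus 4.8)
The plan is to run the standard maximal-operator-plus-density argument, with Theorem~\ref{th2} supplying the quantitative input. By the equivalence recorded just before the corollary, it suffices to show that
\[
\limsup_{n\to\infty}V_n\bigl(|f-f(x,y)|\bigr)(x,y)=0\qquad\text{for a.e. }(x,y).
\]
As a map on its function argument each $V_n$ is linear and positive (every summand of $V_ng$ is a positive multiple of the integral of $g$ against a characteristic function), so $V_ng\le Vg$ whenever $g\ge 0$, and in particular $W_n(x,y;f)=V_n(|f-f(x,y)|)(x,y)\ge 0$.

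Two easy preliminary facts are needed. First, $C_0:=\sup_n\|V_n\mathbf 1\|_\infty<\infty$: evaluating each of the four blocks of $V_n$ at the constant function $1$ replaces the integrals by explicit measures (the constraint set inside $I_k(t_qe_q)\times I_k$ has measure $m_k/(M_kM_n)$, while $\mu(I_n\times I_i(u_se_s))=M_n^{-1}M_i^{-1}$), after which the $\tfrac{M_qM_k}{m_k}$ resp.\ $M_sM_i$ factors cancel and one is left with $M_n^{-1}$ times a sum of the type $\sum_q(n-q)(m_q-1)M_q=\sum_q(n-q)(M_{q+1}-M_q)$, which is $O(M_n)$ since $M_q$ grows at least geometrically. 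Second, $\lim_{n\to\infty}W_n(x,y;P)=0$ for every $(x,y)$ and every Vilenkin polynomial $P$ — this is the ``easy'' claim asserted just before the corollary. Indeed, if $P$ depends only on the first $N$ coordinates in each variable, then in each block of $W_n$ the translation variable ranges over a cube $I_k$ (or $I_n$), so once $k\ge N$ (resp.\ $n\ge N$) the first $N$ coordinates are unaffected by the translation and $P(x-t,y-u)-P(x,y)$ can be nonzero only when the perturbation index $q$ (resp.\ $s$) is $<N$; for those surviving terms the sum over the perturbation index is a fixed geometric sum $\sum_{q<N}M_q=O(M_N)$ independent of $n$, while the remaining factors (the $M_n^{-1}$-type normalization times at most $n$ values of the inner index) are $O(n/M_n)$, and the terms with $k<N$ are handled the same way; hence $W_n(x,y;P)\to 0$.

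Now fix $\delta>0$ and choose a Vilenkin polynomial $P$ with $\|f-P\|_1<\delta$; set $g:=f-P$. For every $(x,y)$ and $n$, the pointwise inequality $|f-f(x,y)|\le|g|+|P-P(x,y)|+|g(x,y)|$ together with the linearity and positivity of $V_n$ gives
\[
W_n(x,y;f)\le V_n(|g|)(x,y)+V_n(|P-P(x,y)|)(x,y)+|g(x,y)|\,V_n(\mathbf 1)(x,y).
\]
Letting $n\to\infty$ and using the two preliminary facts,
\[
\limsup_{n\to\infty}W_n(x,y;f)\le V(|g|)(x,y)+C_0\,|g(x,y)|.
\]
By the weak-$(1,1)$ part of Theorem~\ref{th2} applied to $|g|$, for every $\lambda>0$,
\[
\mu\{\limsup_n W_n(\cdot;f)>2\lambda\}\le\mu\{V(|g|)>\lambda\}+\mu\{C_0|g|>\lambda\}\le\frac{(c+C_0)\|g\|_1}{\lambda}<\frac{(c+C_0)\delta}{\lambda}.
\]
Since $\delta>0$ is arbitrary, the left-hand side vanishes for every $\lambda>0$, hence $\limsup_nW_n(x,y;f)=0$, and since $W_n\ge 0$ this forces $\lim_nW_n(x,y;f)=0$ for a.e. $(x,y)$, which is exactly the statement that a.e. point is a Marcinkiewicz--Lebesgue point of $f$.

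The genuinely hard work sits entirely in Theorem~\ref{th2} (the $H_p$-boundedness for $p>1/2$ and the weak-$(1,1)$ bound for $Vf$), which I am taking as already established; granting it, the corollary is routine. Within the argument above the only points that require a little care are the nonlinearity of $f\mapsto|f-f(x,y)|$, which is why the three-term splitting and the bound $\sup_n\|V_n\mathbf 1\|_\infty<\infty$ are needed, and the verification of the polynomial case, which is elementary but relies on the explicit product formula for $r_{i,n}$.
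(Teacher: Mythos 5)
Your proof is correct and follows exactly the route the paper intends: it establishes the claim for Vilenkin polynomials, invokes the weak-$(1,1)$ bound of Theorem~\ref{th2} for $V$, and runs the standard Marcinkiewicz--Zygmund density argument, which is precisely what the paper's one-sentence proof appeals to. The only difference is that you spell out the details (the three-term splitting handling the nonlinearity of $f\mapsto|f-f(x,y)|$, the bound $\sup_n\|V_n\mathbf 1\|_\infty<\infty$, and the polynomial case) that the paper leaves implicit; these details check out.
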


\begin{corollary}
(Gat \cite{gat}) Let $f\in L_{1}\left( G_{m}\times G_{m}\right) $. Then 
\begin{equation*}
\lim\limits_{n\rightarrow \infty }\sigma _{n}\left( x,y;f\right) =f\left(
x,y\right) \,\,\,\text{a. e. }\left( x,y\right) \in G_{m}\times G_{m}.
\end{equation*}
\end{corollary}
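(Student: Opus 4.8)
The plan is to obtain this statement as an immediate consequence of Theorem~\ref{th1} together with the preceding corollary. First I would invoke Corollary~1, which asserts that for $f\in L_{1}(G_{m}\times G_{m})$ the relation $\lim_{n\to\infty}W_{n}(x,y;f)=0$ holds for a.e.\ $(x,y)\in G_{m}\times G_{m}$; equivalently, the exceptional set
\begin{equation*}
E:=\{(x,y)\in G_{m}\times G_{m}:\ (x,y)\text{ is not a Marcinkiewicz-Lebesgue point of }f\}
\end{equation*}
satisfies $\mu(E)=0$. Next I would apply Theorem~\ref{th1}: at every point $(x,y)\notin E$, i.e.\ at every Marcinkiewicz-Lebesgue point of $f$, the Marcinkiewicz-Fej\'er means satisfy $\sigma_{n}(x,y;f)\to f(x,y)$ as $n\to\infty$. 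Since $(G_{m}\times G_{m})\setminus E$ has full measure, this yields $\sigma_{n}(x,y;f)\to f(x,y)$ for a.e.\ $(x,y)$, which is exactly the assertion.

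There is no genuine obstacle in this last step; it is a one-line combination of two results already in hand. The substantive work sits upstream. Corollary~1 itself rests on Theorem~\ref{th2} --- the $H_{p}$-boundedness for $p>1/2$ and the weak $(1,1)$ bound for the maximal operator $Vf=\sup_{n}|V_{n}f|$ --- fed through the density argument of Marcinkiewicz and Zygmund \cite{MZ}, using that $W_{n}(x,y;f)\to 0$ for Vilenkin polynomials and that the polynomials are dense in $L_{1}(G_{m}\times G_{m})$. Theorem~\ref{th1} in turn requires the kernel identities that motivate the definitions of $W_{n}$ and $V_{n}$, in particular the closed form of $r_{i,n}$ read off from \eqref{rad}, in order to bound $|\sigma_{n}(x,y;f)-f(x,y)|$ pointwise by $W_{n}(x,y;f)$ up to terms that tend to $0$ as $n\to\infty$. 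If I were to expand the present argument, the only point demanding care is the equivalence noted in Section~3 between $W_{n}f(x,y)\to 0$ and $V_{n}(|f-f(x,y)|)(x,y)\to 0$, which guarantees that the maximal operator controlled in Theorem~\ref{th2} is precisely the one needed to linearize the Marcinkiewicz-Lebesgue condition.
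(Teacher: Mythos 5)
Your proof is correct and follows exactly the route the paper intends: Corollary~2 is the immediate combination of Corollary~1 (a.e.\ point is a Marcinkiewicz-Lebesgue point of $f$) with Theorem~\ref{th1} (convergence of $\sigma_{n}(x,y;f)$ to $f(x,y)$ at every such point). Your remarks on the upstream dependencies (Theorem~\ref{th2}, the density argument, and the equivalence between $W_{n}f(x,y)\to 0$ and $V_{n}(|f-f(x,y)|)(x,y)\to 0$) accurately reflect how the paper organizes the supporting material.
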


\section{Auxiliary Propositions}

\begin{W}
(Weisz \cite{WeBook}) Suppose that the operator $T$ \thinspace is\thinspace $%
\sigma $-sublinear and $p$-quasi-local for each $0<p_{0}<p\leq 1$ . If $T$
is bounded from $L_{\infty }(G_{m}\times G_{m})$ to $L_{\infty }(G_{m}\times
G_{m})$, then 
\begin{equation*}
\left\Vert Tf\right\Vert _{p}\leq c_{p}\left\Vert f\right\Vert
_{p}\,\,\,\,\,\,\,\,\,\,\,(f\in H_{p}\left( G_{m}\times G_{m}\right) )
\end{equation*}%
for every $0<p_{0}<p<\infty $. In particular for $f\in L_{1}(G_{m}\times
G_{m})$, holds

\begin{equation*}
\left\Vert Tf\right\Vert _{\text{weak}\_L_{1}(G_{m}\times G_{m})}\leq
c\left\Vert f\right\Vert _{1}.
\end{equation*}
\end{W}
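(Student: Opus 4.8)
The plan is to derive Theorem W from the atomic characterization of the martingale Hardy spaces $H_p(G_m\times G_m)$, combined with the two hypotheses on $T$, and then to reach the full range $p_0<p<\infty$ (including the weak-type endpoint at $p=1$) by interpolation.

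First I would record that for $0<p\le 1$ every $f\in H_p(G_m\times G_m)$ can be written as $f=\sum_{k}\mu_k a_k$, convergent in the $H_p$ quasi-norm, where each $a_k$ is a $p$-atom with support a generalized square $I_k\times J_k\in F_{n_k}$ and $\sum_k|\mu_k|^p\lesssim\|f\|_{H_p}^p$. Since $T$ is $\sigma$-sublinear, $|Tf|^p\le\sum_k|\mu_k|^p\,|Ta_k|^p$ pointwise, so it is enough to bound $\int_{G_m\times G_m}|Ta|^p\,d\mu$ uniformly over $p$-atoms $a$ supported on $I\times J$. Splitting the integral over $I\times J$ and over its complement, the $p$-quasi-locality hypothesis controls the complement by the constant $C_p$, while on $I\times J$ the $L_\infty$-boundedness of $T$ together with $\|a\|_\infty\le\mu(I\times J)^{-1/p}$ gives $\|Ta\|_\infty\le\|T\|_{L_\infty\to L_\infty}\,\mu(I\times J)^{-1/p}$, hence $\int_{I\times J}|Ta|^p\le\|T\|_{L_\infty\to L_\infty}^p$. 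Summing yields $\|Tf\|_p^p\lesssim(C_p+1)\sum_k|\mu_k|^p\lesssim\|f\|_{H_p}^p$, i.e.\ the asserted inequality for $p_0<p\le 1$. The one genuine subtlety here is the interchange of $T$ with the infinite sum; this is precisely what $\sigma$-sublinearity (rather than plain sublinearity) is designed for, and is handled by truncating the series and using that its tail is small in $H_p$.

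Next, to obtain the inequality for $1<p<\infty$, I would fix some $p_1$ with $p_0<p_1\le 1$. The previous step gives $T:H_{p_1}\to L_{p_1}$ boundedly, and $T:L_\infty\to L_\infty$ is bounded by hypothesis; interpolating between these two endpoints, and using that $H_r=L_r$ with equivalent norms for $r>1$ together with the standard interpolation scale for martingale Hardy spaces, one gets $T:H_p\to L_p$ bounded for all $p_1<p<\infty$. Together with the range $p_0<p\le 1$ already treated, this covers $p_0<p<\infty$. For the weak-$(1,1)$ assertion I would invoke the companion fact from \cite{WeBook} that a $\sigma$-sublinear operator bounded from $H_p$ to $L_p$ for every $p_0<p<1$ is automatically of weak type $(1,1)$ on $L_1$; alternatively one argues directly via a Calder\'on--Zygmund splitting $f=g+b$ at height $\lambda$, estimating $g$ through the $L_\infty$ (or $L_p$, $p>1$) bound and $b$ through quasi-locality, exactly as in the atomic computation above.

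The main obstacle is not the atomic estimate, which is routine once the decomposition is available, but the interpolation steps: one needs the identification of the real interpolation spaces between a martingale Hardy space of small exponent and $L_\infty$, and, at $p=1$, the passage from $H_p$-type bounds to a true weak-$(1,1)$ bound on $L_1$ (bearing in mind $H_1\subsetneq L_1$). Both are part of the machinery in \cite{WeBook}; the only point requiring care when applying them in the present setting is to ensure the quasi-locality constants $C_p$ stay under control as $p$ runs over a fixed interval $(p_0,1]$, so that a single interpolation endpoint suffices.
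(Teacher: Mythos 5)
The paper does not prove Theorem W at all: it is imported verbatim from Weisz's monograph \cite{WeBook}, and the proof given there runs exactly along the lines you describe — the atomic decomposition of $H_p$ together with $\sigma$-sublinearity reduces the $H_p\to L_p$ bound for $p_0<p\le 1$ to the single-atom estimate (quasi-locality off the supporting square, $L_\infty$-boundedness on it), and the range $1<p<\infty$ together with the weak-$(1,1)$ endpoint then follows by real interpolation between $H_{p_1}\to L_{p_1}$ and $L_\infty\to L_\infty$, using that $L_1$ embeds into the weak Hardy space $H_{1,\infty}$. Your sketch is correct; the one superfluous worry is uniform control of the constants $C_p$ over $(p_0,1]$ — a single exponent $p_1\le 1$ serves as the lower interpolation endpoint, so only $C_{p_1}$ enters the bounds for $p>p_1$.
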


\begin{lemma}
\label{lm1}We have%
\begin{equation*}
M_{A}K_{M_{A}}\left( x,y\right) =\sum\limits_{k=0}^{A-1}r_{k+1,A-1}\left(
x,y\right) M_{k}\sum\limits_{r=1}^{m_{k}-1}\left(
\sum\limits_{q=0}^{r-1}\psi _{M_{k}}^{q}\left( x\right) \right) \left(
\sum\limits_{s=0}^{r-1}\psi _{M_{k}}^{s}\left( y\right) \right)
\end{equation*}%
\begin{equation*}
\times D_{M_{k}}\left( x\right) D_{M_{k}}\left( y\right)
\end{equation*}%
\begin{equation*}
+\sum\limits_{k=0}^{A-1}r_{k+1,A-1}\left( x,y\right)
\sum\limits_{r=1}^{m_{k}-1}\left( \sum\limits_{q=0}^{r-1}\psi
_{M_{k}}^{q}\left( x\right) \right) \psi _{M_{k}}^{r}\left( y\right)
D_{M_{k}}\left( x\right) M_{k}K_{M_{k}}\left( y\right)
\end{equation*}%
\begin{equation*}
+\sum\limits_{k=0}^{A-1}r_{k+1,A-1}\left( x,y\right)
\sum\limits_{r=1}^{m_{k}-1}\left( \sum\limits_{s=0}^{r-1}\psi
_{M_{k}}^{s}\left( y\right) \right) \psi _{M_{k}}^{r}\left( x\right)
D_{M_{k}}\left( y\right) M_{k}K_{M_{k}}\left( x\right)
\end{equation*}%
\begin{equation*}
+r_{1,A-1}\left( x+y\right) .
\end{equation*}
\end{lemma}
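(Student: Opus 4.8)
The plan is to start from the definition $M_A K_{M_A}(x,y)=\sum_{k=0}^{M_A-1}D_k(x)D_k(y)$ and to exploit the dyadic-like structure of the Dirichlet kernels $D_k$ relative to the generalized number system based on $m$. The basic tool is the decomposition of an index $k<M_A$ in its $m$-adic digits: writing $k=\sum_{l=0}^{A-1}k_l M_l$, one has the factorization $D_k(x)=\sum_{l}\psi_{M_l}^{k_l}(x)\cdots D_{M_l}(x)$ type identities; more precisely I would use the standard recursion $D_{k}(x)=D_{sM_l}(x)+\psi_{sM_l}(x)D_{k-sM_l}(x)$ and the identity $D_{sM_l}(x)=\bigl(\sum_{q=0}^{s-1}\psi_{M_l}^{q}(x)\bigr)D_{M_l}(x)$, which follows because $\psi_{jM_l+M_l}=\psi_{M_l}\psi_{jM_l}$ on $I_l$ and $D_{M_l}$ is the indicator of $I_l$ (times $M_l$). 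Summing $D_k(x)D_k(y)$ over $k$ in each ``block'' $\{rM_k\le \cdot<(r+1)M_k\}$ and over $r=1,\dots,m_k-1$, the cross terms reorganize exactly into the three sums appearing in the statement, the first one collecting the ``diagonal'' contribution $D_{M_k}(x)D_{M_k}(y)$ with the scalar factor $M_k$, and the other two collecting the ``mixed'' contributions where one coordinate carries $D_{M_k}$ and the other carries the lower-order Fej\'er kernel $M_kK_{M_k}$.

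Concretely, I would proceed by induction on $A$ (or telescoping in $k$): split the sum $\sum_{k=0}^{M_A-1}$ according to the largest index $k$ with $k_k\neq 0$, i.e.\ group the terms with $M_k\le j<M_{k+1}$. Within such a group write $j=rM_k+j'$ with $1\le r\le m_k-1$ and $0\le j'<M_k$; then $D_j(x)=\bigl(\sum_{q=0}^{r-1}\psi_{M_k}^q(x)\bigr)D_{M_k}(x)+\psi_{M_k}^r(x)D_{j'}(x)$, and similarly for $y$. Multiplying these two expressions and summing over $j'$ from $0$ to $M_k-1$ and over $r$ from $1$ to $m_k-1$ produces four pieces: (i) $M_k\bigl(\sum_q\psi_{M_k}^q(x)\bigr)\bigl(\sum_s\psi_{M_k}^s(y)\bigr)D_{M_k}(x)D_{M_k}(y)$ after using $\sum_{j'=0}^{M_k-1}1=M_k$; (ii) $\bigl(\sum_q\psi_{M_k}^q(x)\bigr)\psi_{M_k}^r(y)D_{M_k}(x)\sum_{j'}D_{j'}(y)=\bigl(\sum_q\psi_{M_k}^q(x)\bigr)\psi_{M_k}^r(y)D_{M_k}(x)\,M_kK_{M_k}(y)$; (iii) the symmetric term with $x\leftrightarrow y$; (iv) $\psi_{M_k}^r(x)\psi_{M_k}^r(y)\sum_{j'}D_{j'}(x)D_{j'}(y)$. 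Now the crucial observation is that $\psi_{M_k}^r(x)\psi_{M_k}^r(y)=\psi_{M_k}^r(x+y)=r_k^r(x+y)$, so summing (iv) over $r=0,\dots,m_k-1$ (note $r=0$ is precisely the block $0\le j<M_k$, which is $\sum_{j'<M_k}D_{j'}(x)D_{j'}(y)=M_kK_{M_k}(\cdot)$, the next stage of the recursion) yields the factor $\sum_{r=0}^{m_k-1}r_k^r(x+y)$, which by \eqref{rad} equals $m_k$ if $x_k+y_k\equiv 0\ (\mathrm{mod}\ m_k)$ and $0$ otherwise --- this is exactly how the factor $r_{k+1,A-1}(x,y)$ gets built up as one peels off $k=A-1,A-2,\dots$ . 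Iterating down to $k=0$, where $M_0K_{M_0}=D_0 D_0\equiv 1$, leaves the residual term $r_{1,A-1}(x+y)$, matching the last line of the statement.

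The main obstacle I anticipate is purely bookkeeping: correctly tracking how the product $\prod$ defining $r_{k+1,A-1}$ is assembled from the successive factors $\sum_{r=0}^{m_l-1}\psi_{M_l}^r(x+y)$ as the recursion unwinds, and making sure the ranges of $k$, $r$, $q$, $s$ are exactly as in the statement (in particular that the diagonal term in block $k$ is multiplied by the product $r_{k+1,A-1}$ of \emph{all higher} blocks, while block $k$ itself contributes its own $\sum_r\psi_{M_k}^r$ only inside that diagonal/mixed sum, not to $r_{k+1,A-1}$). No hard analysis is involved; the identity is exact and finite, so once the block decomposition and the single application of \eqref{rad} per level are set up carefully, the three displayed sums plus the residual term fall out. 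I would therefore organize the write-up as: (1) state the one-dimensional block identity for $D_j$; (2) multiply and expand into the four pieces above; (3) recognize piece (iv) as the recursion step and apply \eqref{rad}; (4) unwind the recursion and collect terms.
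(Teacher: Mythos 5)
Your proposal is correct and follows essentially the same route as the paper: the paper likewise expands $D_{j+rM_{A-1}}$ via the identity $D_{j+rM_{A-1}}(x)=\bigl(\sum_{q=0}^{r-1}\psi_{M_{A-1}}^{q}(x)\bigr)D_{M_{A-1}}(x)+\psi_{M_{A-1}}^{r}(x)D_{j}(x)$ (quoted from \cite{GES} rather than rederived), multiplies the two one-dimensional expansions, collects the same four pieces --- recognizing the piece $\sum_{r=0}^{m_{A-1}-1}\psi_{M_{A-1}}^{r}(x+y)\,M_{A-1}K_{M_{A-1}}(x,y)$ as the recursion step that assembles $r_{k+1,A-1}$ --- and then iterates down to $k=0$. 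The only quibble is your base case (with the convention $D_{0}\equiv 0$ one has $M_{0}K_{M_{0}}=D_{0}D_{0}\equiv 0$, not $1$), but the paper is equally terse about the origin of the residual term $r_{1,A-1}$, so this bookkeeping point does not separate the two arguments.
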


\begin{proof}[Proof of Lemma \protect\ref{lm1}]
Since (see \cite{GES}) 
\begin{equation*}
D_{j+rM_{A}}\left( x\right) =\left( \sum\limits_{q=0}^{r-1}\psi
_{M_{A}}^{q}\left( x\right) \right) D_{M_{A}}\left( x\right) +\psi
_{M_{A}}^{r}\left( x\right) D_{j}\left( x\right)
\end{equation*}%
\begin{equation*}
j=0,1,...,M_{A}-1,r=1,2,...,m_{A-1}-1,
\end{equation*}%
we can write%
\begin{equation*}
M_{A}K_{M_{A}}\left( x,y\right) =\sum\limits_{j=0}^{M_{A}-1}D_{j}\left(
x\right) D_{j}\left( y\right)
\end{equation*}%
\begin{equation*}
=M_{A-1}K_{M_{A-1}}\left( x,y\right)
+\sum\limits_{r=1}^{m_{A-1}-1}\sum\limits_{j=0}^{M_{A-1}-1}D_{j+rM_{A-1}}%
\left( x\right) D_{j+rM_{A-1}}\left( y\right)
\end{equation*}%
\begin{equation*}
=M_{A-1}K_{M_{A-1}}\left( x,y\right)
+\sum\limits_{r=1}^{m_{A-1}-1}\sum\limits_{j=0}^{M_{A-1}-1}\left(
\sum\limits_{q=0}^{r-1}\psi _{M_{A-1}}^{q}\left( x\right) \right)
D_{M_{A-1}}\left( x\right)
\end{equation*}%
\begin{equation*}
\times \left( \sum\limits_{q=0}^{r-1}\psi _{M_{A-1}}^{q}\left( y\right)
\right) D_{M_{A-1}}\left( y\right)
\end{equation*}%
\begin{equation*}
+\sum\limits_{r=1}^{m_{A-1}-1}\sum\limits_{j=0}^{M_{A-1}-1}\psi
_{M_{A-1}}^{r}\left( x\right) \psi _{M_{A-1}}^{r}\left( y\right) D_{j}\left(
x\right) D_{j}\left( y\right)
\end{equation*}%
\begin{equation*}
+\sum\limits_{r=1}^{m_{A-1}-1}\sum\limits_{j=0}^{M_{A-1}-1}\left(
\sum\limits_{q=0}^{r-1}\psi _{M_{A-1}}^{q}\left( x\right) \right)
D_{M_{A-1}}\left( x\right)
\end{equation*}%
\begin{equation*}
\times \psi _{M_{A-1}}^{r}\left( y\right) D_{j}\left( y\right)
\end{equation*}%
\begin{equation*}
+\sum\limits_{r=1}^{m_{A-1}-1}\sum\limits_{j=0}^{M_{A-1}-1}\left(
\sum\limits_{s=0}^{r-1}\psi _{M_{A-1}}^{s}\left( y\right) \right)
D_{M_{A-1}}\left( y\right)
\end{equation*}%
\begin{equation*}
\times \psi _{M_{A-1}}^{r}\left( x\right) D_{j}\left( x\right)
\end{equation*}%
\begin{equation*}
=\left( \sum\limits_{r=0}^{m_{A-1}-1}\psi _{M_{A-1}}^{r}\left( x+y\right)
\right) M_{A-1}K_{M_{A-1}}\left( x,y\right)
\end{equation*}%
\begin{equation*}
+M_{A-1}\sum\limits_{r=1}^{m_{A-1}-1}\left( \sum\limits_{q=0}^{r-1}\psi
_{M_{A-1}}^{q}\left( x\right) \right) \left( \sum\limits_{s=0}^{r-1}\psi
_{M_{A-1}}^{s}\left( y\right) \right) D_{M_{A-1}}\left( x\right)
D_{M_{A-1}}\left( y\right)
\end{equation*}%
\begin{equation*}
+\sum\limits_{r=1}^{m_{A-1}-1}\left( \sum\limits_{q=0}^{r-1}\psi
_{M_{A-1}}^{q}\left( x\right) \right) \psi _{M_{A-1}}^{r}\left( y\right)
D_{M_{A-1}}\left( x\right) M_{A-1}K_{M_{A-1}}\left( y\right)
\end{equation*}%
\begin{equation*}
+\sum\limits_{r=1}^{m_{A-1}-1}\left( \sum\limits_{s=0}^{r-1}\psi
_{M_{A-1}}^{s}\left( y\right) \right) \psi _{M_{A-1}}^{r}\left( x\right)
D_{M_{A-1}}\left( y\right) M_{A-1}K_{M_{A-1}}\left( x\right) .
\end{equation*}

Iterating this equality we obtain the proof of Lemma \ref{lm1}.
\end{proof}

By results in \cite{PSim} we have%
\begin{equation}
\left\vert K_{M_{A}}\left( x\right) \right\vert \lesssim
\sum\limits_{s=0}^{A}\frac{M_{s}}{M_{A}}\sum%
\limits_{x_{s}=1}^{m_{s}-1}D_{M_{A}}\left( x-x_{s}e_{s}\right)  \label{est1}
\end{equation}%
and%
\begin{equation}
n\left\vert K_{n}\left( x\right) \right\vert \lesssim
\sum\limits_{j=0}^{A}M_{j}\left\vert K_{M_{j}}\left( x\right) \right\vert
,M_{A}\leq n<M_{A+1}.  \label{est2}
\end{equation}%
Then from (\ref{est1}) and (\ref{est2}) we can write%
\begin{eqnarray}
n\left\vert K_{n}\left( x\right) \right\vert &\lesssim
&\sum\limits_{j=0}^{A}\sum\limits_{s=0}^{j}M_{s}\sum%
\limits_{x_{s}=1}^{m_{s}-1}D_{M_{j}}\left( x-x_{s}e_{s}\right)  \label{fejer}
\\
&\lesssim
&\sum\limits_{s=0}^{A}M_{s}\sum\limits_{j=s}^{A}\sum%
\limits_{x_{s}=1}^{m_{s}-1}D_{M_{j}}\left( x-x_{s}e_{s}\right) .  \notag
\end{eqnarray}

\begin{lemma}
\label{lm2}Let $M_{A}\leq n<M_{A+1}$. Then wee have%
\begin{equation*}
n\left\vert K_{n}\left( x,y\right) \right\vert \lesssim
\sum\limits_{j=0}^{A}\sum\limits_{q=0}^{j-1}\sum%
\limits_{k=q}^{j-1}r_{k+1,j-1}\left( x,y\right) 
\end{equation*}%
\begin{equation*}
\times M_{q}D_{M_{k}}\left( x\right)
\sum\limits_{y_{q}=1}^{m_{q}-1}D_{M_{k}}\left( y-y_{q}e_{q}\right) 
\end{equation*}%
\begin{equation*}
+\sum\limits_{j=0}^{A}\sum\limits_{q=0}^{j-1}\sum%
\limits_{k=q}^{j-1}r_{k+1,j-1}\left( x,y\right) 
\end{equation*}%
\begin{equation*}
\times M_{q}D_{M_{k}}\left( y\right)
\sum\limits_{x_{q}=1}^{m_{q}-1}D_{M_{k}}\left( x-x_{q}e_{q}\right) 
\end{equation*}%
\begin{equation*}
+\sum\limits_{j=0}^{A}D_{M_{j}}\left( x\right)
\sum\limits_{s=0}^{j}M_{s}\sum\limits_{i=s}^{j}\sum%
\limits_{y_{s}=1}^{m_{s}-1}D_{M_{i}}\left( y-y_{s}e_{s}\right) 
\end{equation*}%
\begin{equation*}
+\sum\limits_{j=0}^{A}D_{M_{j}}\left( y\right)
\sum\limits_{s=0}^{j}M_{s}\sum\limits_{i=s}^{j}\sum%
\limits_{x_{s}=1}^{m_{s}-1}D_{M_{i}}\left( x-x_{s}e_{s}\right) .
\end{equation*}
\end{lemma}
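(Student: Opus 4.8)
The plan is to reduce everything to Lemma \ref{lm1} and the one--dimensional estimates \eqref{est1} and \eqref{est2}. The first step is a reduction of the general kernel to dyadic orders: for $M_{A}\le n<M_{A+1}$ I would show
\begin{equation*}
n\left\vert K_{n}\left( x,y\right) \right\vert \lesssim \sum_{j=0}^{A}M_{j}\left\vert K_{M_{j}}\left( x,y\right) \right\vert +\sum_{j=0}^{A}D_{M_{j}}\left( x\right) M_{j}\left\vert K_{M_{j}}\left( y\right) \right\vert +\sum_{j=0}^{A}D_{M_{j}}\left( y\right) M_{j}\left\vert K_{M_{j}}\left( x\right) \right\vert ,
\end{equation*}
where $K_{M_{j}}\left( x,y\right) $ denotes the two--dimensional Marcinkiewicz kernel of order $M_{j}$ and $K_{M_{j}}\left( \cdot \right) $ the one--dimensional Fej\'er kernel. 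To this end I would write $nK_{n}\left( x,y\right) =\sum_{k=0}^{n-1}D_{k}\left( x\right) D_{k}\left( y\right) $, split off the block $\sum_{k=0}^{M_{A}-1}D_{k}D_{k}=M_{A}K_{M_{A}}\left( x,y\right) $, and for the range $M_{A}\le k<n$ write the index according to its $A$--th digit and apply, in each variable, the identity (see \cite{GES}) $D_{j+rM_{A}}\left( x\right) =\bigl( \sum_{q=0}^{r-1}\psi _{M_{A}}^{q}\left( x\right) \bigr) D_{M_{A}}\left( x\right) +\psi _{M_{A}}^{r}\left( x\right) D_{j}\left( x\right) $. The product of the two decompositions produces four blocks; after summing over the inner index, using $\sum_{k'=0}^{M_{A}-1}D_{k'}=M_{A}K_{M_{A}}$ in one variable and \eqref{est2}, three of them land in one of the three sums above, while the fourth is the ``diagonal'' block $\psi _{M_{A}}^{r}\left( x+y\right) M_{A}K_{M_{A}}\left( x,y\right) $, together with a remainder $\psi _{M_{A}}^{n_{A}}\left( x+y\right) n'K_{n'}\left( x,y\right) $ of lower order $n'<M_{A}$ which is absorbed by induction on $n$. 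Boundedness of the Vilenkin group ensures $\bigl\vert \sum_{q=0}^{r-1}\psi _{M_{A}}^{q}\left( \cdot \right) \bigr\vert \le m_{A}\lesssim 1$ and that there are $\lesssim 1$ admissible values of $r$, so all such scalar factors may simply be discarded.

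Next I would feed Lemma \ref{lm1} into the first sum. Using $\bigl\vert \sum_{q}\psi _{M_{k}}^{q}\left( \cdot \right) \bigr\vert \lesssim 1$ and the elementary identity $D_{M_{k}}\left( y\right) =M_{k}\mathbb{I}_{I_{k}}\left( y\right) =D_{M_{k}}\left( y-y_{k}e_{k}\right) $, whence $D_{M_{k}}\left( y\right) \le \sum_{y_{k}=1}^{m_{k}-1}D_{M_{k}}\left( y-y_{k}e_{k}\right) $, the first term of Lemma \ref{lm1} is dominated by the first term on the right--hand side of the asserted inequality (the instance $q=k$). Applying \eqref{est1} to the factor $M_{k}K_{M_{k}}\left( y\right) $, respectively $M_{k}K_{M_{k}}\left( x\right) $, in the second and third terms of Lemma \ref{lm1}, and relabelling the resulting index of summation, turns them into precisely the first, respectively the second, term of the asserted inequality; the constraint $q\le k$ there is exactly the range $s\le k$ occurring in \eqref{est1}. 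Finally the last term of Lemma \ref{lm1}, $r_{1,j-1}\left( x+y\right) =r_{1,j-1}\left( x,y\right) $, is dominated by the first term of the statement with $k=q=0$, since $M_{0}=1$ and $D_{M_{0}}\equiv D_{1}\equiv 1$, so that this term is a constant multiple of $r_{1,j-1}\left( x,y\right) $.

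It then remains to treat the one--dimensional sums $\sum_{j}D_{M_{j}}\left( x\right) M_{j}\left\vert K_{M_{j}}\left( y\right) \right\vert $ and $\sum_{j}D_{M_{j}}\left( y\right) M_{j}\left\vert K_{M_{j}}\left( x\right) \right\vert $ produced by the reduction. Applying \eqref{est1} to $M_{j}\left\vert K_{M_{j}}\left( y\right) \right\vert $ bounds the first of these by $\sum_{s=0}^{j}M_{s}D_{M_{j}}\left( x\right) \sum_{y_{s}=1}^{m_{s}-1}D_{M_{j}}\left( y-y_{s}e_{s}\right) $, which is the part $i=j$ of the third term of the asserted inequality; the values $i=s,\dots ,j-1$ arise from the kernels $M_{i}\left\vert K_{M_{i}}\left( y\right) \right\vert $ with $i<j$ generated at the earlier stages of the induction (in particular the purely dyadic block $M_{j}D_{M_{j}}\left( x\right) D_{M_{j}}\left( y\right) $ is the instance $i=s=j$). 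The fourth term is obtained symmetrically, and collecting all the contributions gives the claim.

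I expect the main obstacle to be this first reduction together with the accompanying bookkeeping: the straightforward expansion of $\sum_{k=0}^{n-1}D_{k}\left( x\right) D_{k}\left( y\right) $ spawns a large number of cross terms, and one has to verify that each of them --- the diagonal pieces where several indices coincide, the lower--order remainder produced when $n$ is not a multiple of $M_{A}$, and the stray term $r_{1,j-1}$ --- is routed into exactly one of the four families of the statement, and with the correct index ranges ($0\le q\le k\le j-1$ in the first two, $0\le s\le i\le j$ in the last two). The analytic content, by contrast, is light: only \eqref{est1}, \eqref{est2}, the Dirichlet splitting identity, and the uniform bound $m_{s}\lesssim 1$ from the boundedness of the group.
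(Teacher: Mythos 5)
Your overall route coincides with the paper's: reduce $n\left\vert K_{n}\left( x,y\right) \right\vert $ to dyadic orders plus cross terms, then feed in Lemma \ref{lm1} and the one--dimensional estimates \eqref{est1}--\eqref{est2}. Your treatment of the output of Lemma \ref{lm1} (the diagonal block via $D_{M_{k}}\left( y\right) \leq \sum_{y_{k}=1}^{m_{k}-1}D_{M_{k}}\left( y-y_{k}e_{k}\right) $, the two cross blocks via \eqref{est1}, and the term $r_{1,A-1}$) is correct and in fact more explicit than the paper's own writeup. The gap is in your first step. The paper does not reprove the reduction; it imports from \cite{GoSMH} the inequality in which the cross terms carry the factor $\max_{1\leq \nu \leq n^{\left( j\right) }}\nu \left\vert K_{\nu }\left( y\right) \right\vert $ rather than $M_{j}\left\vert K_{M_{j}}\left( y\right) \right\vert $, and this difference is essential. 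In your recursion, the stage-$j$ splitting of $\sum_{k=n_{j}M_{j}}^{n^{\left( j\right) }-1}D_{k}\left( x\right) D_{k}\left( y\right) $ produces, besides the diagonal remainder $\psi _{M_{j}}^{n_{j}}\left( x+y\right) n^{\left( j-1\right) }K_{n^{\left( j-1\right) }}\left( x,y\right) $ that you feed back into the induction, the cross remainder $\bigl( \sum_{q<n_{j}}\psi _{M_{j}}^{q}\left( x\right) \bigr) D_{M_{j}}\left( x\right) \psi _{M_{j}}^{n_{j}}\left( y\right) n^{\left( j-1\right) }K_{n^{\left( j-1\right) }}\left( y\right) $ and its mirror image. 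This block does not satisfy the termwise domination your sketch relies on: $n^{\left( j-1\right) }\left\vert K_{n^{\left( j-1\right) }}\left( y\right) \right\vert \not\lesssim M_{j}\left\vert K_{M_{j}}\left( y\right) \right\vert $ pointwise (for the Walsh system and $y$ with $y_{0}\neq 0\neq y_{1}$ one has $M_{j}\left\vert K_{M_{j}}\left( y\right) \right\vert =0$ for $j\geq 2$ by \eqref{est1}, while $M_{1}\left\vert K_{M_{1}}\left( y\right) \right\vert =\left\vert D_{1}\left( y\right) \right\vert =1$), and $D_{M_{j}}\left( x\right) \leq \left( M_{j}/M_{i}\right) D_{M_{i}}\left( x\right) $ only with an unbounded constant. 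So your displayed intermediate reduction is not established, and as written these cross remainders are simply lost.

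Relatedly, your claim that the instances $i<j$ of the third and fourth families ``arise from the kernels $M_{i}\left\vert K_{M_{i}}\left( y\right) \right\vert $ generated at the earlier stages of the induction'' cannot be right: stage $i<j$ of the recursion only produces terms carrying the factor $D_{M_{i}}\left( x\right) $, never $D_{M_{j}}\left( x\right) $, so after \eqref{est1} it yields only the matched instances $i=j$. The mismatched instances $D_{M_{j}}\left( x\right) M_{s}D_{M_{i}}\left( y-y_{s}e_{s}\right) $ with $s\leq i<j$ are precisely the footprint of the stage-$j$ cross remainder above: applying \eqref{est2} and then \eqref{est1}, i.e.\ the chain \eqref{fejer}, to $n^{\left( j-1\right) }\left\vert K_{n^{\left( j-1\right) }}\left( y\right) \right\vert $ gives $D_{M_{j}}\left( x\right) \sum_{i\leq j-1}\sum_{s\leq i}M_{s}\sum_{y_{s}=1}^{m_{s}-1}D_{M_{i}}\left( y-y_{s}e_{s}\right) $, which is exactly why the statement needs the double sum over $s\leq i\leq j$. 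The fix is small --- either quote the reduction of \cite{GoSMH} with the maximal factor, as the paper does, or keep the cross remainders explicitly and apply \eqref{fejer} to them --- after which the rest of your argument goes through and agrees with the paper's proof.
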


\begin{proof}[proof of Lemma \protect\ref{lm2}]
It is proved in \cite{GoSMH} that%
\begin{equation*}
n\left\vert K_{n}\left( x,y\right) \right\vert \lesssim
\sum\limits_{j=0}^{A}M_{j}\left\vert K_{M_{j}}\left( x,y\right) \right\vert
\end{equation*}%
\begin{equation*}
+\sum\limits_{j=0}^{A}D_{M_{j}}\left( x\right) \max\limits_{1\leq n\leq
n^{\left( j\right) }}n\left\vert K_{n}\left( y\right) \right\vert
\end{equation*}%
\begin{equation*}
+\sum\limits_{j=0}^{A}D_{M_{j}}\left( y\right) \max\limits_{1\leq n\leq
n^{\left( j\right) }}n\left\vert K_{n}\left( x\right) \right\vert ,
\end{equation*}%
where $n^{\left( j\right) }:=\sum\limits_{k=0}^{j}n_{k}M_{k}$.

Then from Lemma \ref{lm1} and estimation (\ref{est2}) we can write%
\begin{equation*}
n\left\vert K_{n}\left( x,y\right) \right\vert \lesssim
\sum\limits_{j=0}^{A}\sum\limits_{k=0}^{j-1}r_{k+,j-1}\left( x,y\right)
D_{M_{k}}\left( x\right) M_{k}\left\vert K_{M_{k}}\left( y\right)
\right\vert 
\end{equation*}%
\begin{equation*}
+\sum\limits_{j=0}^{A}\sum\limits_{k=0}^{j-1}r_{k+1,j-1}\left( x,y\right)
D_{M_{k}}\left( y\right) M_{k}\left\vert K_{M_{k}}\left( x\right)
\right\vert 
\end{equation*}%
\begin{equation*}
+\sum\limits_{j=0}^{A}D_{M_{j}}\left( x\right) \max\limits_{1\leq n\leq
n^{\left( j\right) }}n\left\vert K_{n}\left( y\right) \right\vert 
\end{equation*}%
\begin{equation*}
+\sum\limits_{j=0}^{A}D_{M_{j}}\left( y\right) \max\limits_{1\leq n\leq
n^{\left( j\right) }}n\left\vert K_{n}\left( x\right) \right\vert 
\end{equation*}%
\begin{equation*}
\lesssim \sum\limits_{j=0}^{A}\sum\limits_{k=0}^{j-1}r_{k+1,j-1}\left(
x,y\right) D_{M_{k}}\left( x\right) 
\end{equation*}%
\begin{equation*}
\times
\sum\limits_{q=0}^{k}M_{q}\sum\limits_{y_{q}=1}^{m_{q}-1}D_{M_{k}}\left(
y-y_{q}e_{q}\right) 
\end{equation*}%
\begin{equation*}
+\sum\limits_{j=0}^{A}\sum\limits_{k=0}^{j-1}r_{k+1,j-1}\left( x,y\right)
D_{M_{k}}\left( y\right) 
\end{equation*}%
\begin{equation*}
\times
\sum\limits_{q=0}^{k}M_{q}\sum\limits_{x_{q}=1}^{m_{q}-1}D_{M_{k}}\left(
x-x_{q}e_{q}\right) 
\end{equation*}%
\begin{equation*}
+\sum\limits_{j=0}^{A}D_{M_{j}}\left( x\right)
\sum\limits_{s=0}^{j}M_{s}\sum\limits_{i=s}^{j}\sum%
\limits_{y_{s}=1}^{m_{s}-1}D_{M_{i}}\left( y-y_{s}e_{s}\right) 
\end{equation*}%
\begin{equation*}
+\sum\limits_{j=0}^{A}D_{M_{j}}\left( y\right)
\sum\limits_{s=0}^{j}M_{s}\sum\limits_{i=s}^{j}\sum%
\limits_{x_{s}=1}^{m_{s}-1}D_{M_{i}}\left( x-x_{s}e_{s}\right) .
\end{equation*}

Lemma \ref{lm2} is proved.
\end{proof}

\section{Proofs of Main Results}

\begin{proof}[Proof of Theorem 1]
We can write 
\begin{equation*}
\left\vert \sigma _{n}\left( x,y;f\right) -f\left( x,y\right) \right\vert 
\end{equation*}%
\begin{equation*}
\leq \int\limits_{G_{m}\times G_{m}}\left\vert f\left( x-t,y-u\right)
-f\left( x,y\right) \right\vert \left\vert K_{n}\left( t,u\right)
\right\vert d\mu \left( t,u\right) 
\end{equation*}%
\begin{eqnarray*}
&\leq &\frac{c}{n}\sum_{j=0}^{A}\sum_{q=0}^{j-1}\sum_{k=q}^{j-1}\sum%
\limits_{u_{q}=1}^{m_{q}-1}\int\limits_{G_{m}\times G_{m}}\left\vert f\left(
x-t,y-u\right) -f\left( x,y\right) \right\vert  \\
&&\times r_{k+1,j-1}(t,u)M_{q}D_{M_{k}}(t)D_{M_{k}}(u-u_{q}e_{q})d\mu \left(
t,u\right) 
\end{eqnarray*}%
\begin{eqnarray*}
&&+\frac{c}{n}\sum_{j=0}^{A}\sum_{q=0}^{j-1}\sum_{k=q}^{j-1}\sum%
\limits_{t_{q}=1}^{m_{q}-1}\int\limits_{G_{m}\times G_{m}}\left\vert f\left(
x-t,y-u\right) -f\left( x,y\right) \right\vert  \\
&&\times r_{k+1,j-1}(t,u)M_{q}D_{M_{k}}(t-t_{q}e_{q})D_{M_{k}}(u)d\mu \left(
t,u\right) 
\end{eqnarray*}%
\begin{eqnarray*}
&&+\frac{1}{n}\sum_{j=0}^{A}\sum_{s=0}^{j-1}\sum_{i=s}^{j}\sum%
\limits_{u_{s}=1}^{m_{s}-1}M_{s}\int\limits_{G_{m}\times G_{m}}\left\vert
f\left( x-t,y-u\right) -f\left( x,y\right) \right\vert  \\
\times  &&D_{M_{j}}\left( t\right) D_{M_{i}}\left( u-u_{s}e_{s}\right) d\mu
\left( t,u\right) 
\end{eqnarray*}%
\begin{eqnarray*}
&&+\frac{c}{n}\sum_{j=0}^{A}\sum_{s=0}^{j-1}\sum_{i=s}^{j}\sum%
\limits_{t_{s}=1}^{m_{s}-1}M_{s}\int\limits_{G_{m}\times G_{m}}\left\vert
f\left( x-t,y-u\right) -f\left( x,y\right) \right\vert  \\
\times  &&D_{M_{j}}\left( t-t_{s}e_{s}\right) D_{M_{i}}\left( u\right) d\mu
\left( t,u\right) 
\end{eqnarray*}

\begin{eqnarray*}
&=&\frac{c}{n}\sum_{j=0}^{A}\sum_{q=0}^{j-1}\sum_{k=q}^{j-1}\sum%
\limits_{u_{q}=1}^{m_{q}-1}M_{q}M_{k}^{2}\int\limits_{I_{k}\times
I_{k}\left( u_{q}e_{q}\right) }\left\vert f\left( x-t,y-u\right) -f\left(
x,y\right) \right\vert  \\
&&\times r_{k+1,j-1}(t,u)d\mu \left( t,u\right) 
\end{eqnarray*}%
\begin{eqnarray*}
&&+\frac{c}{n}\sum_{j=0}^{A}\sum_{q=0}^{j-1}\sum_{k=q}^{j-1}\sum%
\limits_{u_{q}=1}^{m_{q}-1}M_{q}M_{k}^{2}\int\limits_{I_{k}\left(
t_{q}e_{q}\right) \times I_{k}}\left\vert f\left( x-t,y-u\right) -f\left(
x,y\right) \right\vert  \\
&&\times r_{k+1,j-1}(t,u)d\mu \left( t,u\right) 
\end{eqnarray*}%
\begin{equation*}
+\frac{c}{n}\sum_{j=0}^{A}\sum_{s=0}^{j}\sum_{i=s}^{j}\sum%
\limits_{u_{s}=1}^{m_{s}-1}M_{q}M_{i}M_{j}
\end{equation*}%
\begin{equation*}
\times \int\limits_{I_{j}\times I_{i}\left( u_{s}e_{s}\right) }\left\vert
f\left( x-t,y-u\right) -f\left( x,y\right) \right\vert d\mu \left(
t,u\right) 
\end{equation*}%
\begin{equation*}
+\frac{c}{n}\sum_{j=0}^{A}\sum_{s=0}^{j}\sum_{i=s}^{j}\sum%
\limits_{t_{s}=1}^{m_{s}-1}M_{q}M_{i}M_{j}
\end{equation*}%
\begin{equation*}
\times \int\limits_{I_{i}\left( t_{s}e_{s}\right) \times I_{j}}\left\vert
f\left( x-t,y-u\right) -f\left( x,y\right) \right\vert d\mu \left(
t,u\right) 
\end{equation*}%
\begin{equation*}
\leq \frac{c}{n}\sum_{j=0}^{A}M_{j}W_{j}\left( x,y;f\right) 
\end{equation*}%
which tends to $0$ as $n\rightarrow \infty $. This completes the proof of
Theorem \ref{th1}.
\end{proof}

\begin{proof}[Proof of Theorem 2]
Since 
\begin{equation*}
Vf\leq \sum\limits_{j=1}^{4}V^{\left( j\right) }f,
\end{equation*}%
by Theorem W, the proof of Theorem \ref{th2} will be complete if we show
that the operators $V^{\left( i\right) }$ ,$\,i=1,2,3,4\,\,$ are
p-quasi-local for each $1/2<p\leq 1$ and bounded from $L_{\infty
}(G_{m}\times G_{m})$ to $L_{\infty }(G_{m}\times G_{m})$.

It follows from (\ref{bound}) that 
\begin{equation*}
\left\Vert Vf\right\Vert _{\infty }\leq c\left\Vert f\right\Vert _{\infty
}\sup\limits_{n}\sum\limits_{q=0}^{n}\sum\limits_{k=q}^{n}\frac{M_{q}M_{k}}{%
M_{k}M_{n}}\leq c\left\Vert f\right\Vert _{\infty }.
\end{equation*}

Let $a$ be an arbitrary atom with support $I_{N}\left( z^{\prime },z^{\prime
\prime }\right) =I_{N}(z^{\prime })\times I_{N}(z^{\prime \prime })$. It is
easy to see that $V_{n}(a)=0$ if $n<N$. Therefore we can suppose that $n\geq
N$. We may assume that $z^{\prime }=z^{\prime \prime }=0.$ Hence 
\begin{equation}
\text{supp}\left( a\right) \subset I_{N}\times I_{N}.  \label{atom}
\end{equation}

\textbf{Step 1: Integrating over} $\overline{I}_{N}\times \overline{I}_{N}$.
If $q\geq N$ then $y-u\notin I_{N}$ by (\ref{bound}). Hence 
\begin{equation*}
a\left( x-t,y-u\right) =0.
\end{equation*}%
Consequently, we can write 
\begin{equation*}
V_{n}^{\left( 1\right) }\left( x,y;a\right)
:=\sum\limits_{q=0}^{N-1}\sum\limits_{k=q}^{N-1}\frac{M_{k}M_{q}}{m_{k}}%
\sum\limits_{t_{q}=1}^{m_{q}-1}
\end{equation*}%
\begin{equation*}
\int\limits_{I_{k}\left( t_{q}e_{q}\right) \times I_{k}}a\left(
x-t,y-u\right) \mathbb{I}_{\left\{ t_{r}+u_{r}\left( \text{mod} m_{r}\right)
=0,r=k+1,...,n-1\right\} }\left( t,u\right) d\mu \left( t,u\right) .
\end{equation*}%
Then from (\ref{atom}) $a\left( x-t,y-u\right) \neq 0$ implies that 
\begin{equation*}
t=\left( 0,...,0,t_{q},0,...,0,x_{k},...,x_{N-1},t_{N},...\right) 
\end{equation*}%
\begin{equation*}
x=\left( 0,...,0,t_{q},0,...,0,x_{k},...,x_{N-1},...\right) 
\end{equation*}%
\begin{equation*}
u=\left( 0,...,0,y_{k},m_{k+1}-x_{k+1},...,m_{N-1}-x_{N-1},u_{N},...\right) 
\end{equation*}%
\begin{equation*}
y=\left( 0,...,0,y_{k},m_{k+1}-x_{k+1},...,m_{N-1}-x_{N-1},y_{N},...\right) .
\end{equation*}%
Hence 
\begin{eqnarray*}
\left\vert V_{n}^{\left( 1\right) }\left( x,y;a\right) \right\vert 
&\lessapprox &\frac{M_{N}^{2/p}}{M_{N}^{2}}\sum\limits_{q=0}^{N-1}\sum%
\limits_{k=q}^{N-1}M_{k}M_{q}\mathbb{I}_{I_{k}\left( t_{q}e_{q}\right)
}\left( x\right)  \\
&&\times \mathbb{I}_{I_{N}\left(
0,...,0,y_{k},m_{k+1}-x_{k+1},...,m_{N-1}-x_{N-1}\right) }\left( y\right) 
\end{eqnarray*}%
and 
\begin{eqnarray}
\int\limits_{\overline{I}_{N}\times \overline{I}_{N}}\left( V^{\left(
1\right) }\left( x,y;a\right) \right) ^{p}d\mu \left( x,y\right)  &\leq &%
\frac{c_{p}M_{N}^{2}}{M_{N}^{2p}}\sum\limits_{q=0}^{N-1}\sum%
\limits_{k=q}^{N-1}M_{k}^{p}M_{q}^{p}\frac{1}{M_{k}}\frac{1}{M_{N}}
\label{v1-1} \\
&\leq &c_{p}\frac{M_{N}}{M_{N}^{2p}}\sum\limits_{q=0}^{N-1}M_{q}^{p}\sum%
\limits_{k=q}^{N-1}M_{k}^{p-1}  \notag \\
&\leq &c_{p}<\infty \qquad \,\,\left( 1/2<p\leq 1\right) .  \notag
\end{eqnarray}

Analogously, we can prove that%
\begin{equation}
\int\limits_{\overline{I}_{N}\times \overline{I}_{N}}\left( V^{\left(
2\right) }\left( x,y;a\right) \right) ^{p}d\mu \left( x,y\right) \leq
c_{p}<\infty \qquad \,\,\left( 1/2<p\leq 1\right) .  \label{V2-1}
\end{equation}

Since $x+t\notin I_{N}$ we obtain, 
\begin{equation}
V_{n}^{\left( 3\right) }\left( x,y;a\right) =0.  \label{V3-1}
\end{equation}

Analogously, we can prove that 
\begin{equation}
V_{n}^{\left( 4\right) }\left( x,y;a\right) =0.  \label{V4-1}
\end{equation}

Combining (\ref{bound})$\,$and (\ref{v1-1}-\ref{V4-1}) $\,\,$we obtain 
\begin{equation}
\int\limits_{\overline{I}_{N}\times \overline{I}_{N}}\left( V\left(
x,y;a\right) \right) ^{p}d\mu \left( x,y\right) \leq c_{p}\,\,\,\left(
1/2<p\leq 1\right) .  \label{main1}
\end{equation}

\textbf{Step 2: Integrating over} $\overline{I}_{N}\times I_{N}$. Since $%
V_{n}^{\left( 1\right) }\left( x,y;a\right) =0$ for $q\geq N$ we have 
\begin{eqnarray}
&&V_{n}^{\left( 1\right) }\left( x,y;a\right)   \label{V11+V12} \\
&=&\sum\limits_{q=0}^{N-1}\sum\limits_{k=q}^{N-1}\frac{M_{k}M_{q}}{m_{k}}%
\sum\limits_{t_{q}=1}^{m_{q}-1}  \notag \\
&&\int\limits_{I_{k}\left( t_{q}e_{q}\right) \times I_{k}}a\left(
x-t,y-u\right) \mathbb{I}_{\left\{ t_{r}+u_{r}\left( \text{mod} m_{r}\right)
=0,r=k+1,...,n-1\right\} }\left( t,u\right) d\mu \left( t,u\right)   \notag
\\
&&+\sum\limits_{q=0}^{N-1}\sum\limits_{k=N}^{n}\frac{M_{k}M_{q}}{m_{k}}%
\sum\limits_{t_{q}=1}^{m_{q}-1}  \notag \\
&&\int\limits_{I_{k}\left( t_{q}e_{q}\right) \times I_{k}}a\left(
x-t,y-u\right) \mathbb{I}_{\left\{ t_{r}+u_{r}\left( \text{mod} m_{r}\right)
=0,r=k+1,...,n-1\right\} }\left( t,u\right) d\mu \left( t,u\right)   \notag
\\
&=&V_{n}^{\left( 1,1\right) }\left( x,y;a\right) +V_{n}^{\left( 1,2\right)
}\left( x,y;a\right) .  \notag
\end{eqnarray}%
From (\ref{atom}) $V_{n}^{\left( 1,1\right) }\left( x,y;a\right) \neq 0$
implies that 
\begin{equation*}
u=\left( 0,...,0,u_{N},...\right) 
\end{equation*}%
\begin{equation*}
y=\left( 0,...,0,y_{N},...\right) .
\end{equation*}%
\begin{equation*}
t=\left( 0,...,0,t_{q},0,...,0,t_{k},0,...,0,t_{N},...\right) 
\end{equation*}%
\begin{equation*}
x=\left( 0,...,0,t_{q},0,...,0,t_{k},0,...,0,x_{N},...\right) 
\end{equation*}%
Consequently, 
\begin{eqnarray*}
\left\vert V_{n}^{\left( 1,1\right) }\left( x,y;a\right) \right\vert 
&\lesssim &\frac{M_{N}^{2/p}}{M_{N}^{2}}\sum\limits_{q=0}^{N-1}\sum%
\limits_{k=q}^{N-1}M_{k}M_{q}\sum\limits_{t_{q}=1}^{m_{q}-1}\sum%
\limits_{t_{k}=1}^{m_{k}-1} \\
&&\mathbb{I}_{I_{N}\left( e_{q}t_{q}+e_{k}t_{k}\right) }\left( x\right) 
\mathbb{I}_{I_{N}}\left( y\right) 
\end{eqnarray*}%
and 
\begin{equation}
\int\limits_{\overline{I}_{N}\times I_{N}}\left( \sup\limits_{n}\left\vert
V_{n}^{\left( 1,1\right) }\left( x,y;a\right) \right\vert \right) ^{p}\leq 
\frac{c_{p}M_{N}^{2}}{M_{N}^{2p}}\sum\limits_{q=0}^{N-1}\sum%
\limits_{k=q}^{N-1}M_{k}^{p}M_{q}^{p}\frac{1}{M_{N}^{2}}\leq c_{p}.
\label{V11}
\end{equation}

From (\ref{atom})$\,V_{n}^{\left( 1,2\right) }\left( x,y;a\right) \neq 0$
implies that 
\begin{equation*}
t=\left( 0,...,0,t_{q},0,...,0,t_{k},...,t_{n-1},t_{n},...\right)
\end{equation*}%
\begin{equation*}
x=\left( 0,...,0,t_{q},0,...,0,x_{N},...\right)
\end{equation*}%
\begin{equation*}
u=\left( 0,...,0,u_{k},\alpha _{k+1},...,\alpha _{n-1},u_{n},...\right)
\end{equation*}%
\begin{equation*}
y=\left( 0,...,0,y_{N},...\right) ,
\end{equation*}%
where%
\begin{equation*}
\alpha _{j}:=\left\{ 
\begin{array}{c}
m_{j}-t_{j},t_{j}\neq 0 \\ 
0,t_{j}=0,j=k+1,...,n-1.%
\end{array}%
\right.
\end{equation*}%
Thus 
\begin{eqnarray*}
\left\vert V_{n}^{\left( 1,2\right) }\left( x,y;a\right) \right\vert
&\lesssim &\frac{M_{N}^{2/p}}{M_{N}M_{n}}\sum\limits_{q=0}^{N-1}\sum%
\limits_{k=q}^{N-1}M_{k}M_{q}\sum\limits_{t_{q}=1}^{m_{q}-1}\mathbb{I}%
_{I_{N}\left( e_{q}t_{q}\right) }\left( x\right) \mathbb{I}_{I_{N}}\left(
y\right) \\
&\lesssim &\frac{cM_{N}^{2/p}}{M_{n}}\sum\limits_{q=0}^{N-1}M_{q}\mathbb{I}%
_{I_{N}\left( e_{q}t_{q}\right) }\left( x\right) \mathbb{I}_{I_{N}}\left(
y\right)
\end{eqnarray*}%
and $\left( n\geq N\right) $ 
\begin{equation}
\int\limits_{\overline{I}_{N}\times I_{N}}\left( \sup\limits_{n}\left\vert
V_{n}^{\left( 1,2\right) }\left( x,y;a\right) \right\vert \right) ^{p}d\mu
\left( x,y\right) \leq \frac{c_{p}M_{N}^{2}}{M_{N}^{2}M_{N}^{p}}%
\sum\limits_{q=0}^{N-1}M_{q}^{p}\leq c_{p}<\infty .  \label{V12}
\end{equation}%
Combining (\ref{V11+V12})-(\ref{V12}) we conclude that 
\begin{equation}
\int\limits_{\overline{I}_{N}\times I_{N}}\left( \left\vert V^{\left(
1\right) }\left( x,y;a\right) \right\vert \right) ^{p}d\mu \left( x,y\right)
\leq c_{p}<\infty .  \label{V1(2)}
\end{equation}

Let $q<N$. Then it is easy to show that $y-u\notin I_{N}$ and consequently,%
\begin{equation*}
a\left( x-t,y-u\right) =0,
\end{equation*}%
\begin{equation*}
V_{n}^{\left( 2\right) }\left( x,y;a\right) =0.
\end{equation*}%
Let $q\geq N$. Then $x-t\notin I_{N}$ and%
\begin{equation*}
V_{n}^{\left( 2\right) }\left( x,y;a\right) =0.
\end{equation*}

Hence%
\begin{equation}
V^{\left( 2\right) }\left( x,y;a\right) =0.  \label{V2(2)}
\end{equation}

Analogously, we can prove that 
\begin{equation}
V^{\left( 4\right) }\left( x,y;a\right) =0.  \label{V4(2)}
\end{equation}

The estimation of $V_{n}^{\left( 3\right) }\left( x,y;a\right) $ is
analogous to the estimation of $V_{n}^{\left( 1\right) }\left( x,y;a\right) $
and we can prove that

\begin{equation}
\int\limits_{\overline{I}_{N}\times I_{N}}\left( \left\vert V^{\left(
3\right) }\left( x,y;a\right) \right\vert \right) ^{p}d\mu \left( x,y\right)
\leq c_{p}<\infty \text{ }\left( 1/2<p\leq 1\right) .  \label{V3(2)}
\end{equation}

Combining (\ref{V1(2)})-(\ref{V3(2)}) we conclude that%
\begin{equation}
\int\limits_{\overline{I}_{N}\times I_{N}}\left( \left\vert V\left(
x,y;a\right) \right\vert \right) ^{p}d\mu \left( x,y\right) \leq
c_{p}<\infty \text{ }\left( 1/2<p\leq 1\right) .  \label{main2}
\end{equation}

\textbf{Step 3: Integrating over}$\,\,\,I_{N}\times \overline{I}_{N}$. This
case is analogous to the step 2 and we obtain that 
\begin{equation}
\int\limits_{I_{N}\times \overline{I}_{N}}\left( \left\vert V\left(
x,y;a\right) \right\vert \right) ^{p}d\mu \left( x,y\right) \leq
c_{p}<\infty \text{ }\left( 1/2<p\leq 1\right) .  \label{main3}
\end{equation}

Combining (\ref{main1}), (\ref{main2}) and (\ref{main3}) we complete the
proof of Theorem \ref{th2}.
\end{proof}

\end{document}